\newtheorem{thm}{Theorem}[section]
\newtheorem{cor}[thm]{Corollary}
\newtheorem{lem}[thm]{Lemma}
\newtheorem{exam}[thm]{Example}
\numberwithin{equation}{section}
\begin{document}

\title{Pseudo core invertibility in Banach *-algebras and its applications}

\author{Huanyin Chen}
\author{Marjan Sheibani$^*$}
\address{
School of Mathematics\\ Hangzhou Normal University\\ Hangzhou, China}
\email{<huanyinchenhz@163.com>}
\address{Farzanegan Campus, Semnan University, Semnan, Iran}
\email{<m.sheibani@semnan.ac.ir>}

\subjclass[2020]{15A09, 16W10.} \keywords{group inverse; pseudo core inverse; additive property; block complex matrix; Banach algebra.}

\begin{abstract} We present new additive results for the pseudo core inverse in a Banach algebra with involution. The necessary and sufficient conditions
under which the sum of two pseudo core invertible elements in Banach *-algebra is pseudo core invertible are obtained. As an application, the pseudo core invertibility for block complex matrices is investigated. These extend the main results of pseudo core invertibility of Gao and Chen [Comm. Algebra, 46(2018), 38--50].
\end{abstract}

\thanks{Corresponding author: Marjan Sheibani}

\maketitle

\section{Introduction}

An involution of a Banach algebra $\mathcal{A}$ is an anti-automorphism whose square is the identity map $1$. A Banach algebra $\mathcal{A}$ with involution $*$ is called a Banach *-algebra, e.g., $C^*$-algebra. Let $\mathcal{A}$ be a Banach *-algebra. An element $a\in \mathcal{A}$ has p-core inverse (i.e., pseudo core inverse) if there exist $x\in \mathcal{A}$ and $k\in \Bbb{N}$ such that $$xa^{k+1}=a^k, ax^2=x, (ax)^*=ax.$$ If such $x$ exists, it is unique, and denote it by $a^{\tiny\textcircled{D}}$. An element $a\in \mathcal{A}$ has Drazin inverse
provided that there exists $x\in \mathcal{A}$ such that $$xa^{k+1}=a^k, ax^2=x, ax=xa,$$ where $k$ is the index of $a$ (denoted by $i(a)$), i.e., the smallest $k$ such that the previous equations are satisfied. Such $x$ is unique if exists, denoted by $a^{D}$, and called the Drazin inverse of $a$. As is well known, a square complex matrix $A$ has group inverse if and only if $rank(A^k)=rank(A^{k+1})$. The p-core invertibility in a Banach *-algebra is attractive. This notion was introduced by Gao and Chen in 2018 (see~\cite{{GC}}). This is a natural extension of the core inverse which is the first studied by Baksalary and Trenkler for a complex matrix in 2010 (see~\cite{BT}). A matrix $A\in C^{n\times n}$ has core inverse $A^{\tiny\textcircled{\#}}$ if and only if $AA^{\tiny\textcircled{\#}}=P_A$ and $\mathcal{R}(A^{\tiny\textcircled{\#}})\subseteq \mathcal{R}(A)$, where $P_A$ is the projection on $\mathcal{R}(A)$ (see~\cite{BT}). Rakic et al. (see~\cite{R}) generalized the core inverse of a complex to the case of an element in a ring. An element $a$ is a Banach algebra $\mathcal{A}$ has core inverse if and only if there exist $x\in \mathcal{A}$ such that $$a=axa, x\mathcal{A}=a\mathcal{A}, \mathcal{A}x=\mathcal{A}a^*.$$ If such $x$ exists, it is unique, and denote it by $a^{\tiny\textcircled{\#}}$. Recently, many authors have studied core and p-core inverses from many different views, e.g., ~\cite{CZ2,CCZ,GC2,GC3,K,M,R,XS,XS1,Z2}.
An element $a\in \mathcal{A}$ has $(1,3)$ inverse provided that there exists some $x\in \mathcal{A}$ such that $a=axa$ and $(ax)^*=ax$. We list several characterizations of p-core inverse.

\begin{thm} (see~\cite[Theorem 2.3 and Theorem 2.5]{GC}, \cite[Theorem 3.1]{XS1}) Let $\mathcal{A}$ be a Banach *-algebra, and let $a\in \mathcal{A}$. Then the following are equivalent:\end{thm}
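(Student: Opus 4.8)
The plan is to prove the listed equivalences by a cyclic chain of implications, using the Drazin inverse as the central bridge between the purely algebraic formulations and the $(1,3)$-invertibility of a power of $a$. The defining system $xa^{k+1}=a^{k}$, $ax^{2}=x$, $(ax)^{*}=ax$ already packages two distinct features: a Drazin-type relation (the first equation, forcing $a^{k}$ to lie in the range of the idempotent $ax$) and a Hermitian-idempotent condition (the third equation). I expect each equivalent characterization in the list to be obtained by isolating one of these two features, so the natural target conditions are: (i) the defining system itself; (ii) $a$ is Drazin invertible and $a^{k}$ admits a $(1,3)$-inverse for $k=i(a)$; and (iii) an ideal/annihilator description in the spirit of Raki\'c's core-inverse criterion, namely the existence of $x$ with $xax=x$, $x\mathcal{A}=a^{k}\mathcal{A}$ and $\mathcal{A}x=\mathcal{A}(a^{k})^{*}$.

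First, starting from a pseudo core inverse $x=a^{\tiny\textcircled{D}}$, I would show $a$ is Drazin invertible and $a^{k}$ is $(1,3)$-invertible. The key object is the idempotent $p:=ax$: from $ax^{2}=x$ one deduces $xax=x$, hence $p^{2}=a(xax)=ax=p$, and $(ax)^{*}=ax$ makes $p$ Hermitian. The equation $a^{k}=xa^{k+1}$ gives $a^{k}=(ax)a^{k}=pa^{k}$, so $(1-p)a^{k}=0$, which exhibits the nilpotent behaviour of $a$ on the complementary component and, together with invertibility of $a$ on $p\mathcal{A}$, yields the Drazin inverse explicitly. Simultaneously $p=ax=a^{k}z$ for a suitable product $z$ of $x$ with powers of $a$, and one checks $a^{k}za^{k}=a^{k}$ with $(a^{k}z)^{*}=a^{k}z$, so $z$ is a $(1,3)$-inverse of $a^{k}$. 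Conversely, assuming $a\in\mathcal{A}^{D}$ of index $k$ and that $a^{k}$ has a $(1,3)$-inverse $y$, I would construct $x=a^{D}a^{k}y$ (equivalently $a^{\tiny\textcircled{D}}=a^{D}a^{k}(a^{k})^{(1,3)}$) and verify the three defining identities directly: $ax=a^{D}a^{k+1}y=a^{k}y$ is Hermitian, while $ax^{2}=x$ and $xa^{k+1}=a^{k}$ follow from the Drazin relations $a^{D}a^{k+1}=a^{k}$ and $aa^{D}a^{D}=a^{D}$ after a short computation. The ideal characterization (iii) is then read off from $x\mathcal{A}=p\mathcal{A}=a^{k}\mathcal{A}$ and $\mathcal{A}x=\mathcal{A}p^{*}=\mathcal{A}(a^{k})^{*}$.

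Uniqueness is the standard argument: two solutions $x_{1},x_{2}$ both satisfy $x_{i}=ax_{i}^{2}=x_{i}ax_{i}$ and induce the same Hermitian idempotent $ax_{1}=ax_{2}$ (since this idempotent is determined by $a^{k}\mathcal{A}$ and the $*$-condition), whence $x_{1}=x_{2}$. The step I expect to be the main obstacle is the bookkeeping of exponents: the index in the Drazin relation, the exponent in $xa^{k+1}=a^{k}$, and the power of $a$ whose $(1,3)$-invertibility is asserted need not a priori be the same integer, so one must show that if the system holds for some $k$ it holds for $k=i(a)$, and that $(1,3)$-invertibility of $a^{k}$ is stable for all $k\geq i(a)$. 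The cleanest route is to reduce everything to the statement that $p=aa^{\tiny\textcircled{D}}$ is the unique Hermitian idempotent with $p\mathcal{A}=a^{k}\mathcal{A}$ and $a$ invertible on $p\mathcal{A}$; once this is in place each condition in the theorem becomes a restatement of either ``$p$ exists'' or ``$a\mid_{p\mathcal{A}}$ is invertible,'' and the remaining work is translating the range and annihilator identities $x\mathcal{A}=a^{k}\mathcal{A}$, $\mathcal{A}x=\mathcal{A}(a^{k})^{*}$ back into the operational equations, which is where the most care is required.
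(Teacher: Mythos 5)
The paper never proves this statement: Theorem 1.1 is quoted as known background, attributed to \cite[Theorems 2.3 and 2.5]{GC} and \cite[Theorem 3.1]{XS1}, so your attempt can only be compared against those cited arguments rather than against anything in this paper. Your outline is essentially that canonical argument: the Hermitian idempotent $p=ax$, the explicit formula $a^{\tiny\textcircled{D}}=a^{D}a^{k}(a^{k})^{(1,3)}$ for the converse direction, and uniqueness via the fact that two Hermitian idempotents generating the same right ideal must coincide. I checked that the steps you defer to short computations do close: $xax=x$ follows from $x=a^{k+1}x^{k+2}$ and $xa^{k+2}=a^{k+1}$; one gets $a^{k}x^{k}=ax=p$, so $z=x^{k}$ is the $(1,3)$-inverse you postulate; and the candidate $x=a^{D}a^{k}(a^{k})^{(1,3)}$ does satisfy all three defining equations. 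Two minor cautions. First, $xa^{k+1}=a^{k}$ alone yields only $(1-xa)a^{k}=0$; to get $(1-p)a^{k}=0$ with $p=ax$, as you assert, you must also invoke $ax^{2}=x$, via $axa^{k}=ax^{2}a^{k+1}=xa^{k+1}=a^{k}$. Second, since the enumerated items were invisible to you, note that the actual list also contains item (4), ``$a^{m}$ has core inverse for some positive integer $m$,'' and item (3) is an ideal-theoretic condition close to your (iii); your description $x\mathcal{A}=a^{k}\mathcal{A}$, $\mathcal{A}x=\mathcal{A}(a^{k})^{*}$ is exactly the Raki\'c-type characterization of the core inverse of $a^{k}$, so both items are within reach of your framework, but a complete write-up must state them and close the cycle, e.g.\ by verifying $a^{\tiny\textcircled{D}}=a^{m-1}(a^{m})^{\tiny\textcircled{\#}}$.
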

\begin{enumerate}
\item [(1)]{\it $a\in \mathcal{A}^{\tiny\textcircled{D}}$.}
\item [(2)]{\it $a\in \mathcal{A}^D$ and $a^k$ has $(1,3)$ inverse, where $k=i(a)$.}
\item [(3)]{\it There exists $x\in \mathcal{A}$ such that $a^nxa^n=a$ and $a^nR=xR=x^*R$ for some $n\in {\Bbb N}$.}
\item [(4)]{\it $a^m\in \mathcal{A}$ has core inverse for some positive integer $m$.}
\end{enumerate}

Let $a,b\in \mathcal{A}$ have p-core inverses. In ~\cite[Theorem 4.4]{GC}, Gao and Chen proved that $a+b$ has p-core inverse when $ab=ba=0$ and $a^*b=0$. This inspires us to investigate new additive properties for p-core invertibility in a Banach *-algebra.

In Section 2, we are concerned with new additive results for p-core invertible elements in a Banach *-algebra. If $ab=ba$ and $a^*b=ba^*$, we present necessary and sufficient conditions under which $a+b\in \mathcal{A}$ is p-core invertible.

Lex $X$ be a Banach space. Denote by $\mathcal{B}(X)$ the Banach algebra of all bounded linear operators from $X$ to itself.
In Section 3,, we apply our additive results to bounded linear operators and obtain various conditions under which a block-operator matrix has p-core inverse.

Throughout the paper, all Banach *-algebras are complex with an identity. An element $p\in \mathcal{A}$ is a projection if $p^2=p=p^*$.
$\mathcal{A}^{D},\mathcal{A}^{\tiny\textcircled{D}}$ and $\mathcal{A}^{nil}$ denote the sets of all Drazin, p-core invertible and nilpotent elements in $\mathcal{A}$ respectively. Let $a\in \mathcal{A}^D$. We use $a^{\pi}$ to stand for the spectral idempotent of $a$ corresponding to $\{ 0\}$, i.e., $a^{\pi}=1-aa^D$.

\section{Key lemmas}

To prove the main results, some lemmas are needed. We begin with

\begin{lem} (~\cite[Proposition 4.2]{GC})) Let $a,b\in \mathcal{A}^{\tiny\textcircled{D}}$. If $ab=ba$ and $a^*b=ba^*$, then $a^{\tiny\textcircled{D}}b=ba^{\tiny\textcircled{D}}$. \end{lem}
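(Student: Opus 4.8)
The plan is to reduce the whole statement to the commutativity of two simpler objects with $b$: the Drazin inverse $a^D$ and a suitable Hermitian projection attached to $a$. First I would set $k=i(a)$ and invoke Theorem 1.1(2): since $a\in\mathcal{A}^{\tiny\textcircled{D}}$, we have $a\in\mathcal{A}^D$ and $a^k$ admits a $(1,3)$-inverse $c$, so that $a^kca^k=a^k$ and $(a^kc)^*=a^kc$. Writing $q:=a^kc$, one checks at once that $q=q^*=q^2$ and $qa^k=a^k$. I would then verify that $x:=a^Dq=a^Da^kc$ satisfies the three defining identities of the pseudo core inverse: using $aa^Da^k=a^k$ one gets $ax=a^kc=q$, whence $(ax)^*=ax$; the identity $xa^{k+1}=a^k$ follows from $a^kca^{k+1}=a^{k+1}$ together with $a^Da^{k+1}=a^k$; and $ax^2=x$ follows once one notes $a^Da^k\in a^k\mathcal{A}$, so that $qa^Da^k=a^Da^k$. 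By uniqueness of the pseudo core inverse this yields the representation $a^{\tiny\textcircled{D}}=a^Dq$.

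With this representation in hand, the desired identity $a^{\tiny\textcircled{D}}b=ba^{\tiny\textcircled{D}}$ reduces to showing that both $a^D$ and $q$ commute with $b$. The commutation $a^Db=ba^D$ is the classical commuting property of the Drazin inverse: since $ab=ba$, the element $b$ commutes with every power of $a$ and with $a^D$. Granting $qb=bq$, I would simply compute $a^{\tiny\textcircled{D}}b=a^Dqb=a^Dbq=ba^Dq=ba^{\tiny\textcircled{D}}$.

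The heart of the argument, and the step I expect to be the main obstacle, is $qb=bq$, because the $(1,3)$-inverse $c$ is far from unique and need not commute with $b$; the point is that the projection $q$ itself is intrinsic. I would prove it by a symmetric two-sided computation that uses both hypotheses. From $ab=ba$ we get $ba^k=a^kb$, so $bq=ba^kc=a^k(bc)$; since $(1-q)a^k=a^k-qa^k=0$, this gives $(1-q)bq=0$, i.e. $qbq=bq$. Here the second hypothesis is essential: taking adjoints in $a^*b=ba^*$ yields $ab^*=b^*a$, hence $b^*a^k=a^kb^*$, and the very same computation with $b$ replaced by $b^*$ gives $qb^*q=b^*q$. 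Taking adjoints of this last identity and using $q^*=q$ produces $qbq=qb$. Comparing the two we obtain $bq=qbq=qb$, which is exactly what is needed. It is precisely this pair of commutations, one for $b$ from $ab=ba$ and one for $b^*$ from the adjoint of $a^*b=ba^*$, that feeds the two halves of the symmetric argument, and I would record both explicitly.
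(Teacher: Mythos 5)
Your proof is correct. There is, however, nothing in the paper to compare it against: Lemma 2.1 is quoted from \cite[Proposition 4.2]{GC} without proof, so your argument stands as a self-contained replacement for the citation. All the steps check out: $q=a^kc$ is a Hermitian idempotent with $qa^k=a^k$; the element $x=a^Dq$ satisfies $xa^{k+1}=a^k$, $ax^2=x$ (via $a^Da^k=a^ka^D\in a^k\mathcal{A}$, whence $qa^Dq=x$), and $ax=q=(ax)^*$, so uniqueness of the pseudo core inverse gives the representation $a^{\tiny\textcircled{D}}=a^Da^k(a^k)^{(1,3)}$; the double-commutant property of the Drazin inverse turns $ab=ba$ into $a^Db=ba^D$; and your two-sided computation $bq=qbq=qb$ is exactly where both hypotheses enter, $ab=ba$ feeding one half and the adjoint relation $ab^*=b^*a$ of $a^*b=ba^*$ feeding the other. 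It is worth noting that the representation you derive is precisely the identity this paper uses silently elsewhere (in Lemma 2.4 and Theorem 3.1 it appears as $aa^{\tiny\textcircled{D}}=a^{m+1}a^D(a^m)^{(1,3)}$), so your route is fully consistent with the authors' own toolkit. A bonus your proof makes visible: the hypothesis $b\in\mathcal{A}^{\tiny\textcircled{D}}$ is never used, so the conclusion $a^{\tiny\textcircled{D}}b=ba^{\tiny\textcircled{D}}$ already holds for $a\in\mathcal{A}^{\tiny\textcircled{D}}$ and arbitrary $b\in\mathcal{A}$ satisfying the two commutation conditions; this is strictly more general than the statement as quoted.
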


\begin{lem} (~\cite[Theorem 4.3]{GC})) Let $a,b\in \mathcal{A}^{\tiny\textcircled{D}}$. If $ab=ba$ and $a^*b=ba^*$, then $ab\in \mathcal{A}^{\tiny\textcircled{D}}$ and $(ab)^{\tiny\textcircled{D}}=a^{\tiny\textcircled{D}}b^{\tiny\textcircled{D}}$.\end{lem}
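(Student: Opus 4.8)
The plan is to verify directly that $x:=a^{\tiny\textcircled{D}}b^{\tiny\textcircled{D}}$ is the p-core inverse of $ab$, i.e. that it satisfies $x(ab)^{k+1}=(ab)^k$, $(ab)x^2=x$ and $((ab)x)^*=(ab)x$ for a suitable $k$. Everything hinges on a package of commutativity relations. From $a^*b=ba^*$ and the involution one gets $ab^*=b^*a$, so the hypotheses are symmetric in $a$ and $b$; applying Lemma 2.1 to the pair $(a,b)$ gives $a^{\tiny\textcircled{D}}b=ba^{\tiny\textcircled{D}}$, and applying it to $(b,a)$ gives $b^{\tiny\textcircled{D}}a=ab^{\tiny\textcircled{D}}$. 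I would also record the one-variable identities coming straight from the definition: $a^{\tiny\textcircled{D}}a^{k+1}=a^k$, $a(a^{\tiny\textcircled{D}})^2=a^{\tiny\textcircled{D}}$, and that $e:=aa^{\tiny\textcircled{D}}$ is a self-adjoint idempotent, with the analogous statements for $b$; the idempotency of $e$ is a short computation from the iterated relation $a^{\tiny\textcircled{D}}=a^k(a^{\tiny\textcircled{D}})^{k+1}$ together with $a^{\tiny\textcircled{D}}a^{k+1}=a^k$.

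The decisive step is to prove the commutativity of the two p-core inverses, $a^{\tiny\textcircled{D}}b^{\tiny\textcircled{D}}=b^{\tiny\textcircled{D}}a^{\tiny\textcircled{D}}$, and this is where I expect the main obstacle to lie. The difficulty is that Lemma 2.1 applies only when the second factor is itself p-core invertible, whereas the natural route passes through $b^*$, which need not belong to $\mathcal{A}^{\tiny\textcircled{D}}$. I would therefore obtain it from the double-commutant behaviour of the p-core inverse: $a^{\tiny\textcircled{D}}$ commutes with every element that commutes with both $a$ and $a^*$, of which Lemma 2.1 is exactly the special case of a p-core invertible partner. Since $b^*$ commutes with $a$ (because $ab^*=b^*a$) and with $a^*$ (because $a^*b^*=(ba)^*=(ab)^*=b^*a^*$), this principle yields $a^{\tiny\textcircled{D}}b^*=b^*a^{\tiny\textcircled{D}}$; combined with $a^{\tiny\textcircled{D}}b=ba^{\tiny\textcircled{D}}$ it shows that $a^{\tiny\textcircled{D}}$ commutes with both $b$ and $b^*$, and applying the same principle to $b$ gives $a^{\tiny\textcircled{D}}b^{\tiny\textcircled{D}}=b^{\tiny\textcircled{D}}a^{\tiny\textcircled{D}}$. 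Establishing (or citing) this commutation principle, rather than the verbatim Lemma 2.1, is the real content of the argument.

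With all four commutations ($ab=ba$, $a^{\tiny\textcircled{D}}b=ba^{\tiny\textcircled{D}}$, $b^{\tiny\textcircled{D}}a=ab^{\tiny\textcircled{D}}$, $a^{\tiny\textcircled{D}}b^{\tiny\textcircled{D}}=b^{\tiny\textcircled{D}}a^{\tiny\textcircled{D}}$) in hand, the three axioms fall out by regrouping the $a$'s with the $a^{\tiny\textcircled{D}}$'s and the $b$'s with the $b^{\tiny\textcircled{D}}$'s. For a common $k$ exceeding $i(a)$ and $i(b)$ one has $(ab)^{k+1}=a^{k+1}b^{k+1}$, so $x(ab)^{k+1}=(a^{\tiny\textcircled{D}}a^{k+1})(b^{\tiny\textcircled{D}}b^{k+1})=a^kb^k=(ab)^k$; likewise $(ab)x^2=(a(a^{\tiny\textcircled{D}})^2)(b(b^{\tiny\textcircled{D}})^2)=a^{\tiny\textcircled{D}}b^{\tiny\textcircled{D}}=x$. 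Finally $(ab)x=(aa^{\tiny\textcircled{D}})(bb^{\tiny\textcircled{D}})=ef$ is a product of two commuting projections, hence itself a projection and in particular self-adjoint, so $((ab)x)^*=(ab)x$. This shows $ab\in\mathcal{A}^{\tiny\textcircled{D}}$ with $(ab)^{\tiny\textcircled{D}}=a^{\tiny\textcircled{D}}b^{\tiny\textcircled{D}}$, and the uniqueness of the p-core inverse closes the argument.
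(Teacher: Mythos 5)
This paper never proves Lemma 2.2: it is imported verbatim from \cite[Theorem 4.3]{GC}, so there is no internal proof to compare you against and your argument must stand on its own. Your overall strategy (verify the three defining equations for $x=a^{\tiny\textcircled{D}}b^{\tiny\textcircled{D}}$, then invoke uniqueness) is the natural one; the symmetry observation $ab^*=b^*a$, the identities for $e=aa^{\tiny\textcircled{D}}$, and the three closing verifications are all correct once the commutation package is in hand. You have also put your finger on exactly the right obstacle: Lemma 2.1 as stated requires the commuting partner to be p-core invertible, so it cannot be applied with partner $b^*$, and without $a^{\tiny\textcircled{D}}b^*=b^*a^{\tiny\textcircled{D}}$ you cannot reach the key relation $a^{\tiny\textcircled{D}}b^{\tiny\textcircled{D}}=b^{\tiny\textcircled{D}}a^{\tiny\textcircled{D}}$.

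The one genuine gap is that the ``double-commutant principle'' everything hinges on --- $a^{\tiny\textcircled{D}}$ commutes with every $c$ that commutes with $a$ and $a^*$ --- is asserted rather than established, and nothing stated in this paper covers it; you flag it as the real content but leave it unproved. It is in fact true, and you can close the hole in a few lines. Let $k=i(a)$ and $p=aa^{\tiny\textcircled{D}}$. Iterating $a(a^{\tiny\textcircled{D}})^2=a^{\tiny\textcircled{D}}$ gives $a^{\tiny\textcircled{D}}=a^k(a^{\tiny\textcircled{D}})^{k+1}$, hence $p=a^{k+1}(a^{\tiny\textcircled{D}})^{k+1}$ and $a^Dp=a^Da^{k+1}(a^{\tiny\textcircled{D}})^{k+1}=a^k(a^{\tiny\textcircled{D}})^{k+1}=a^{\tiny\textcircled{D}}$. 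Now suppose $ac=ca$ and $a^*c=ca^*$. First, $(1-p)cp=(1-p)ca^{k+1}(a^{\tiny\textcircled{D}})^{k+1}=(1-p)a^{k+1}c(a^{\tiny\textcircled{D}})^{k+1}=0$, since $(1-p)a^{k+1}=0$. Second, $c^*$ commutes with $a$ (involute $a^*c=ca^*$), so the same computation gives $(1-p)c^*p=0$; applying the involution and using $p^*=p$ yields $pc(1-p)=0$. Together these give $cp=pcp=pc$. Since $ac=ca$ also forces $a^Dc=ca^D$ (the classical bicommutant property of the Drazin inverse), we conclude $ca^{\tiny\textcircled{D}}=ca^Dp=a^Dpc=a^{\tiny\textcircled{D}}c$. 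With this lemma inserted, your two applications of it (to $c=b^*$ with respect to $a$, then to $c=a^{\tiny\textcircled{D}}$ with respect to $b$) and the final verifications go through, and the proof is complete.
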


\begin{lem} (~\cite[Theorem 4.4]{GC})) Let $a,b\in \mathcal{A}^{\tiny\textcircled{D}}$. If $ab=ba=0$ and $a^*b=0$, then $a+b\in \mathcal{A}^{\tiny\textcircled{D}}$.\end{lem}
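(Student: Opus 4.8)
The plan is to produce the inverse explicitly and then check the three defining relations; concretely I would show that $a+b$ is pseudo core invertible with $(a+b)^{\tiny\textcircled{D}}=a^{\tiny\textcircled{D}}+b^{\tiny\textcircled{D}}$. Put $x=a^{\tiny\textcircled{D}}$, $y=b^{\tiny\textcircled{D}}$, $e=ax$, $f=by$ and $k=\max\{i(a),i(b)\}$. Before touching the sum I would assemble the facts I need about a single pseudo core inverse. From $ax^2=x$ one gets $x=a^nx^{n+1}$ for all $n$, and combined with $xa^{k+1}=a^k$ (equivalently $a^k=xa^{k+1}$) this yields $a^{k+1}x^{k+1}=ax$ and hence the outer relation $x=a^kx^{k+1}=xa^{k+1}x^{k+1}=x(ax)=xax$; in particular $e=ax$ is idempotent (since $e^2=axax=a(xax)=ax=e$), $e^{*}=e$ is built into the definition, $ex=ax^2=x$ and $x=xe$. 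Since $k\ge i(a)$ the certificate for $x$ survives at this common exponent, i.e. $xa^{k+1}=a^k$, and symmetrically $f^2=f=f^{*}$, $fy=y$, $y=yf$, $yb^{k+1}=b^k$.

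The second preparatory step, and the place where the two hypotheses enter, is to turn $ab=ba=0$ and $a^{*}b=0$ into annihilation identities. From $ab=ba=0$ together with $x=ax^2$, $y=by^2$ I get $bx=b(ax^2)=(ba)x^2=0$, $ay=a(by^2)=0$, together with $(a+b)^n=a^n+b^n$ for every $n$. The involution is what makes the cross terms disappear: writing $e=(ax)^{*}=x^{*}a^{*}$ and using $a^{*}b=0$ (whence $b^{*}a=(a^{*}b)^{*}=0$) I obtain $eb=x^{*}(a^{*}b)=0$, $fa=y^{*}(b^{*}a)=0$ and $ef=x^{*}(a^{*}b)y=0$ (so also $fe=0$); and from $a^{*}y=a^{*}(by^2)=0$, $b^{*}x=b^{*}(ax^2)=0$ I get $ey=x^{*}(a^{*}y)=0$ and $fx=y^{*}(b^{*}x)=0$. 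Thus $e,f$ are orthogonal projections and the off-diagonal interactions between the $a$- and $b$-data all vanish.

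With these in hand the three verifications are short bookkeeping. For the Hermitian axiom, $(a+b)(x+y)=ax+ay+bx+by=e+f$ since $ay=bx=0$, and $e+f$ is self-adjoint. For the power axiom, $(a+b)(x+y)^2=(e+f)(x+y)=ex+ey+fx+fy=x+y$, using $ex=x$, $fy=y$ and $ey=fx=0$. For the remaining relation, $(x+y)(a+b)^{k+1}=(x+y)(a^{k+1}+b^{k+1})=xa^{k+1}+yb^{k+1}=a^k+b^k=(a+b)^k$, because the cross terms $xb^{k+1}=x(eb)b^k=0$ and $ya^{k+1}=y(fa)a^k=0$ vanish thanks to $x=xe$, $eb=0$ and $y=yf$, $fa=0$. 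Uniqueness of the pseudo core inverse then gives $(a+b)^{\tiny\textcircled{D}}=x+y$. I expect the genuine work to lie not in these final lines but in the preparatory identities: establishing $x=xax$ (equivalently that $e$ is a projection) from the defining equations, and deploying the involution so that $a^{*}b=0$ converts into $eb=ef=ey=0$ — it is precisely this step, and not $ab=ba=0$ alone, that forces the Hermitian condition and kills every cross term.
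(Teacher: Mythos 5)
Your proof is correct. Note that the paper itself offers no proof to compare against: this is Lemma 2.3, imported verbatim from Gao and Chen \cite[Theorem 4.4]{GC}, so your argument supplies a self-contained verification of a result the paper only cites, and it in fact recovers the stronger conclusion proved in the original source, namely the explicit formula $(a+b)^{\tiny\textcircled{D}}=a^{\tiny\textcircled{D}}+b^{\tiny\textcircled{D}}$. The individual steps all check out: the identity $x=xax$ follows from $x=a^kx^{k+1}=xa^{k+1}x^{k+1}=x(ax)$; the exponent may legitimately be raised to $k=\max\{i(a),i(b)\}$ because $xa^{k+1}=a^k$ implies $xa^{k+2}=a^{k+1}$ upon right multiplication by $a$, while the other two defining equations do not involve $k$; the annihilation relations $ay=bx=0$, $eb=fa=ey=fx=ef=fe=0$ follow correctly from $ab=ba=0$, from $a^*b=0$ (hence $b^*a=(a^*b)^*=0$), and from the self-adjointness of $e=ax$ and $f=by$; and the three defining identities for $x+y$ then collapse to the diagonal terms exactly as you wrote, with $(a+b)^n=a^n+b^n$ handling the power condition. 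Two harmless redundancies: $a^{k+1}x^{k+1}=ax$ already follows from $x=a^kx^{k+1}$ alone, without invoking $xa^{k+1}=a^k$; and $fe=0$ is established but never used. Your closing remark is also on target: the hypothesis $a^*b=0$ is exactly what kills the cross terms $eb$, $ey$, $ef$ and makes $e+f$ self-adjoint, which is why $ab=ba=0$ alone would not suffice for the pseudo core (as opposed to Drazin) inverse.
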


\begin{lem} Let $a\in \mathcal{A}^{\tiny\textcircled{D}}$ and $b\in \mathcal{A}$. Then the following are equivalent:\end{lem}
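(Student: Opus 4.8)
The plan is to reduce every condition in the list to a statement about the single Hermitian projection $p=aa^{\tiny\textcircled{D}}$ together with the companion idempotent $a^{\pi}=1-aa^{D}$, exploiting the structure that the p-core inverse forces on $a$. Before comparing the conditions I would record the identities used throughout: from $ax^{2}=x$ with $x=a^{\tiny\textcircled{D}}$ one gets $a^{\tiny\textcircled{D}}=(ax)x=pa^{\tiny\textcircled{D}}$, so $a^{\tiny\textcircled{D}}\in p\mathcal{A}$; one checks from the three defining equations that $p$ is a projection, with $p^{*}=p$ immediate and $p^{2}=p$ following after combining $xa^{k+1}=a^{k}$ with $ax^{2}=x$; and from $xa^{k+1}=a^{k}$ one gets $pa^{m}=a^{m}$ for large $m$, so the \emph{core part} of $a$ sits in $p\mathcal{A}$ while $a^{\pi}$ annihilates it. I would also keep Theorem 1.1(3) at hand, since the range identities $a^{n}R=a^{\tiny\textcircled{D}}R=(a^{\tiny\textcircled{D}})^{*}R$ recorded there are exactly what lets me pass between annihilator conditions on $a^{n}$, on $a^{\tiny\textcircled{D}}$, and on $p$.

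With these in place I would prove the equivalences as a cycle of implications. Every listed condition is, or can be rewritten as, a product of $b$ with an $a$-derived element — a power $a^{k}$, the Drazin inverse $a^{D}$, the p-core inverse $a^{\tiny\textcircled{D}}$, or the projection $p$ — set equal either to $0$ or to the matching product on the opposite side. The purely multiplicative transfers I would carry out using the formulas that express each of these elements as a polynomial times an outer inverse of another (the relation behind Theorem 1.1(2), writing $a^{\tiny\textcircled{D}}$ through $a^{D}$ and a $(1,3)$-inverse of $a^{k}$); this turns, say, $a^{k}b=0$ into $a^{\tiny\textcircled{D}}b=0$ and back, and likewise for the commutation versions. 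Where a hypothesis of the form $a^{*}b=ba^{*}$ together with $ab=ba$ is available, I would feed it directly into Lemma 2.1 to obtain $a^{\tiny\textcircled{D}}b=ba^{\tiny\textcircled{D}}$, and then deduce that $b$ commutes with $p$ and with $a^{\pi}$.

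The delicate step, and the one I expect to be the main obstacle, is any condition governed by the involution. Since $*$ is an anti-automorphism, a relation such as $a^{*}b=0$ (or its power version $a^{n*}b=0$) cannot be transported to $a^{\tiny\textcircled{D}}$ or to $p$ by naive cancellation; instead I would left-multiply by a suitable adjoint and use $(a^{\tiny\textcircled{D}})^{*}a^{*}=(aa^{\tiny\textcircled{D}})^{*}=p$ to convert it into the statement $pb=0$, and conversely recover the $*$-condition by applying $*$ to $pb=0$ together with the complementary identity $a^{n*}=a^{n*}p$, which follows from $pa^{n}=a^{n}$; the range identities of Theorem 1.1(3) serve as a backstop for this annihilator bookkeeping. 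The organizing device behind the whole argument is the Peirce decomposition $\mathcal{A}=p\mathcal{A}p\oplus p\mathcal{A}(1-p)\oplus(1-p)\mathcal{A}p\oplus(1-p)\mathcal{A}(1-p)$ relative to $p$: each listed condition pins down exactly which Peirce corners of $b$ must vanish or commute past $p$, so once all of them are expressed in these terms the equivalences become transparent, and uniqueness of $a^{\tiny\textcircled{D}}$ ensures that none of the expressions is ambiguous.
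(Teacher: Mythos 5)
There is a genuine gap: your proposal never engages with what the lemma actually asserts. The two conditions being proven equivalent are $(1-a^{\tiny\textcircled{D}}a)b=0$ and $(1-aa^{\tiny\textcircled{D}})b=0$ — nothing more. Your text instead sketches a strategy for a guessed statement involving a whole list of annihilator and commutation conditions ($a^{k}b=0$, $a^{*}b=0$, $a^{*}b=ba^{*}$ with $ab=ba$, etc.), and it remains throughout a plan ("I would\ldots") rather than an argument: no condition is written down, no implication is actually derived, and the machinery you invoke (Peirce corners of $b$, the range identities of Theorem 1.1(3), Lemma 2.1) is never applied to a concrete claim. A proof attempt that does not identify the statements to be proven equivalent and does not carry out a single implication cannot be assessed as correct, and in this case the apparatus you set up is far heavier than what the statement needs.

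For comparison, the paper's proof is two short computations using only the defining equations of the pseudo core inverse and its representation $a^{\tiny\textcircled{D}}=a^{D}a^{m}(a^{m})^{(1,3)}$. For $(1)\Rightarrow(2)$: from $(1-a^{\tiny\textcircled{D}}a)b=0$ one has $b=a^{\tiny\textcircled{D}}ab$, and since $a(a^{\tiny\textcircled{D}})^{2}=a^{\tiny\textcircled{D}}$ gives $(1-aa^{\tiny\textcircled{D}})a^{\tiny\textcircled{D}}=0$, it follows that $(1-aa^{\tiny\textcircled{D}})b=(1-aa^{\tiny\textcircled{D}})a^{\tiny\textcircled{D}}ab=0$. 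For $(2)\Rightarrow(1)$: from $b=aa^{\tiny\textcircled{D}}b$ one writes $aa^{\tiny\textcircled{D}}=a^{m+1}a^{D}(a^{m})^{(1,3)}$ and uses $a^{\tiny\textcircled{D}}a^{m+1}=a^{m}$ (which follows from $a^{\tiny\textcircled{D}}a^{k+1}=a^{k}$ by multiplying with powers of $a$) to get $(1-a^{\tiny\textcircled{D}}a)aa^{\tiny\textcircled{D}}=(a^{m}-a^{\tiny\textcircled{D}}a^{m+1})aa^{D}(a^{m})^{(1,3)}=0$, hence $(1-a^{\tiny\textcircled{D}}a)b=0$. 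Note in particular that the asymmetry you worried about (the involution obstructing transport of conditions) is handled entirely by these two algebraic identities; no Peirce decomposition or adjoint manipulation is needed. If you rework your attempt, start by stating the two conditions and then supply exactly these two one-line implications.
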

\begin{enumerate}
\item [(1)] $(1-a^{\tiny\textcircled{D}}a)b=0$.
\vspace{-.5mm}
\item [(2)] $(1-aa^{\tiny\textcircled{D}})b=0$.
\end{enumerate}
\begin{proof} $(1)\Rightarrow (2)$ Since $(1-a^{\tiny\textcircled{D}}a)b=0$, we have $b=a^{\tiny\textcircled{D}}ab$. Hence,
$(1-aa^{\tiny\textcircled{D}})b=(1-aa^{\tiny\textcircled{D}})a^{\tiny\textcircled{D}}ab=0.$

$(2)\Rightarrow (1)$ Since $(1-aa^{\tiny\textcircled{D}})b=0$, we get $b=aa^{\tiny\textcircled{D}}b$. Therefore
$(1-a^{\tiny\textcircled{D}}a)b=(1-a^{\tiny\textcircled{D}}a))aa^{\tiny\textcircled{D}}b=(1-a^{\tiny\textcircled{D}}a))a^{m+1}a^D(a^m)^{(1,3)}=(a^m-a^{\tiny\textcircled{D}}a^{m+1})aa^D(a^m)^{(1,3)}
=0$, as desired.\end{proof}

Let $a, p^2=p\in \mathcal{A}$. Then $a$ has the Pierce decomposition relative to $p$: $pap+pap^{\pi}+p^{\pi}ap+p^{\pi}ap^{\pi}$. We denote it by a matrix form: $\left(\begin{array}{cc}
pap&app^{\pi}\\
p^{\pi}ap&p^{\pi}ap^{\pi}
\end{array}
\right)_p$. We now derive

\begin{lem} \end{lem}
\begin{enumerate}
\item [(1)] Let $x=\left(
\begin{array}{cc}
a&b\\
0&d
\end{array}
\right).$ Then $x\in M_2(\mathcal{A})^{\tiny\textcircled{D}}$ and $x^{\tiny\textcircled{D}}=\left(
\begin{array}{cc}
*&*\\
0&*
\end{array}
\right)$ if and only if $a,d\in \mathcal{A}^{\tiny\textcircled{D}}$ and $\sum\limits_{i=1}^{m}a^{i-1}a^{\pi}bd^{m-i}=0$ for some $m\geq i(a)$.
\item [(2)] Let $p$ be a projection and $x=\left(
\begin{array}{cc}
a&b\\
0&d
\end{array}
\right)_p.$ Then $x\in \mathcal{A}^{\tiny\textcircled{D}}$ and $p^{\pi}x^{\tiny\textcircled{D}}p=0$ if and only if $a\in (p\mathcal{A}p)^{\tiny\textcircled{D}}, d\in (p^{\pi}\mathcal{A}p^{\pi})^{\tiny\textcircled{D}}$ and $\sum\limits_{i=1}^{m}a^{i-1}a^{\pi}bd^{m-i}=0$ for some $m\geq i(a)$.
\end{enumerate}
\begin{proof} $(1)$ $\Longrightarrow $ Since $x\in \mathcal{A}^{\tiny\textcircled{D}}$, it follows by ~\cite[Theorem 2.5]{GC} that $x^m\in \mathcal{A}^{\tiny\textcircled{\#}}$, where $m=i(x)$. Then $m\geq i(a)$. In this case, $(x^m)^{\tiny\textcircled{\#}}=(x^{\tiny\textcircled{D}})^m$ and $x^{\tiny\textcircled{D}}=x^{m-1}(x^m)^{\tiny\textcircled{\#}}$.
By hypothesis, we can write $x^{\tiny\textcircled{D}}=\left(
\begin{array}{cc}
*&*\\
0&*
\end{array}
\right),$ and so $(x^{\tiny\textcircled{D}})^m=\left(
\begin{array}{cc}
*&*\\
0&*
\end{array}
\right).$ This implies that $(x^m)^{\tiny\textcircled{\#}}=\left(
\begin{array}{cc}
*&*\\
0&*
\end{array}
\right).$ Obviously, we have $x^m=\left(
\begin{array}{cc}
a^m&b_m\\
0&d^m
\end{array}
\right)_p,$ where $b_1=b, b_m=ab_{m-1}+bd^{m-1}.$
By induction, we get $b_m=\sum\limits_{i=1}^{m}a^{i-1}bd^{m-i}$.
In light of~\cite[Theorem 2.5]{XS}, $a^m,d^m\in \mathcal{A}^{\tiny\textcircled{\#}}$ and $(a^m)^{\pi}b_m=0$. By virtue of ~\cite[Theorem 2.5]{GC}, $a,d\in \mathcal{A}^{\tiny\textcircled{D}}$. Moreover, we have $\sum\limits_{i=1}^{m}a^{i-1}a^{\pi}bd^{m-i}=0$.

$\Longleftarrow $ Since $a\in (p\mathcal{A}p)^{\tiny\textcircled{D}}, d\in (p^{\pi}\mathcal{A}p^{\pi})^{\tiny\textcircled{D}}$, we easily check that $a^k\in (p\mathcal{A}p)^{\tiny\textcircled{\#}}, d^k\in (p^{\pi}\mathcal{A}p^{\pi})^{\tiny\textcircled{\#}}$, where $k=max\{m,i(b)\}$. Write $x^k=\left(
\begin{array}{cc}
a^k&b_k\\
0&d^k
\end{array}
\right),$ where $b_1=b, b_k=ab_{k-1}+bd^{k-1}.$ As in the preceding discussion,
$a^{\pi}b_k=0$. Then $x^k\in \mathcal{A}^{\tiny\textcircled{\#}}$ by~\cite[Theorem 2.5]{XS}. According to~\cite[Theorem 2.5]{GC}, we prove that $x\in \mathcal{A}^{\tiny\textcircled{D}}$.
Moreover, we have $(x^{\tiny\textcircled{D}})^k=(x^k)^{\tiny\textcircled{\#}}=\left(
\begin{array}{cc}
*&*\\
0&*
\end{array}
\right)_p$. Then $$x^{\tiny\textcircled{D}}=x^{k-1}(x^{\tiny\textcircled{D}})^k=\left(
\begin{array}{cc}
*&*\\
0&*
\end{array}
\right),$$ as asserted.

$(2)$ $\Longrightarrow $ Since $x\in \mathcal{A}^{\tiny\textcircled{D}}$, we have $x^m\in \mathcal{A}^{\tiny\textcircled{\#}}$, where $m=i(x)\geq i(a)$. Moreover,
$(x^m)^{\tiny\textcircled{\#}}=(x^{\tiny\textcircled{D}})^m$ and $x^{\tiny\textcircled{D}}=x^{m-1}(x^m)^{\tiny\textcircled{\#}}$.
Since $p^{\pi}x^{\tiny\textcircled{D}}p=0$, we can write $x^{\tiny\textcircled{D}}=\left(
\begin{array}{cc}
*&*\\
0&*
\end{array}
\right)_p,$ and so $(x^{\tiny\textcircled{D}})^m=\left(
\begin{array}{cc}
*&*\\
0&*
\end{array}
\right)_p.$ This implies that $p^{\pi}(x^{\tiny\textcircled{D}})^mp=0.$ Hence $p^{\pi}(x^m)^{\tiny\textcircled{\#}}p=0.$
Write $x^m=\left(
\begin{array}{cc}
a^m&b_m\\
0&d^m
\end{array}
\right)_p,$ where $b_1=b, b_m=ab_{m-1}+bd^{m-1}.$
Then $b_m=\sum\limits_{i=1}^{m}a^{i-1}bd^{m-i}$.
In light of~\cite[Theorem 2.5]{XS}, $a^m,d^m\in p\mathcal{A}^{\tiny\textcircled{\#}}p\subseteq (p\mathcal{A}p)^{\tiny\textcircled{\#}}$ and $(a^m)^{\pi}b_m=0$. In view of ~\cite[Theorem 2.5]{GC}, $a\in (p\mathcal{A}p)^{\tiny\textcircled{D}}$ and $d\in (p^{\pi}\mathcal{A}p^{\pi})^{\tiny\textcircled{D}}$.
Moreover, we check that $\sum\limits_{i=1}^{m}a^{i-1}a^{\pi}bd^{m-i}=0$.

$\Longleftarrow $ Since $a,d\in \mathcal{A}^{\tiny\textcircled{D}}$, it follows by~\cite[Theorem 2.5]{GC} that $a^k,d^k\in \mathcal{A}^{\tiny\textcircled{\#}}$, where $k=max\{m,i(b)\}$. Write $x^k=\left(
\begin{array}{cc}
a^k&b_k\\
0&d^k
\end{array}
\right)_p,$ where $b_1=b, b_k=ab_{k-1}+bd^{k-1}.$ Then $b_m=\sum\limits_{i=1}^{m}a^{i-1}bd^{m-i}$.
By hypothesis, we have $a^{\pi}b_m=0$. As in the preceding discussion,
we have $a^{\pi}b_k=0$. In light of~\cite[Theorem 2.5]{XS}, $x^k\in \mathcal{A}^{\tiny\textcircled{\#}}$. According to~\cite[Theorem 2.5]{GC}, we get $x\in \mathcal{A}^{\tiny\textcircled{D}}$. Further, $p^{\pi}(x^k)^{\tiny\textcircled{\#}}p=0$, and so $p^{\pi}(x^{\tiny\textcircled{D}})^kp=0$,
Write $(x^{\tiny\textcircled{D}})^k=\left(
\begin{array}{cc}
*&*\\
0&*
\end{array}
\right)_p$. Then $x^{\tiny\textcircled{D}}=x^{k-1}(x^{\tiny\textcircled{D}})^k=\left(
\begin{array}{cc}
*&*\\
0&*
\end{array}
\right)_p.$ This implies that $p^{\pi}x^{\tiny\textcircled{D}}p=0$, as asserted.\end{proof}

\section{Main Results}

This section is devoted to investigate the p-core inverse of the sum of two p-core invertible elements in a Banach *-algebra. We are ready to prove:

\begin{thm} Let $a,b\in \mathcal{A}^{\tiny\textcircled{D}}$. If $ab=ba$ and $a^*b=ba^*$, then the following are equivalent:\end{thm}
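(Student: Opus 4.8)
The plan is to anchor the equivalence at the statement $a+b\in\mathcal{A}^{\tiny\textcircled{D}}$ and to reduce everything to a single block-triangular decomposition to which Lemma 2.5(2) applies. First I would record the commutativity the hypotheses force: from $ab=ba$, $a^*b=ba^*$ and their adjoints one gets that $a,a^*,b,b^*$ generate a commutative $*$-subalgebra, and Lemma 2.1 gives $a^{\tiny\textcircled{D}}b=ba^{\tiny\textcircled{D}}$, together with the symmetric relations $b^{\tiny\textcircled{D}}a=ab^{\tiny\textcircled{D}}$ and $a^{\tiny\textcircled{D}}b^{\tiny\textcircled{D}}=b^{\tiny\textcircled{D}}a^{\tiny\textcircled{D}}$. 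In particular $b$, $b^*$ and $b^{\tiny\textcircled{D}}$ all commute with the element $p:=aa^{\tiny\textcircled{D}}$.

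Next I would set up the decomposition relative to $p=aa^{\tiny\textcircled{D}}$. Writing $x=a^{\tiny\textcircled{D}}$, the relation $ax^2=x$ iterates to $a^{m}x^{m}=ax$, so for $m=i(a)$ we have $a^{m}(a^{m})^{\tiny\textcircled{\#}}=aa^{\tiny\textcircled{D}}$; hence $p$ is a Hermitian idempotent. Since $p^{\pi}a^{m}=0$ forces $p^{\pi}a^{m+1}=0$, one checks $p^{\pi}ap=0$, so $a$ is upper-triangular and $b$ is diagonal relative to $p$: write $a=\left(\begin{smallmatrix}a_1&a_3\\0&a_2\end{smallmatrix}\right)_{p}$ and $b=\left(\begin{smallmatrix}b_1&0\\0&b_2\end{smallmatrix}\right)_{p}$, where $a_1=pap$ is invertible in $p\mathcal{A}p$ (with inverse the restriction of $a^{\tiny\textcircled{D}}$), $a_2=p^{\pi}ap^{\pi}$ is nilpotent, $b_1=pbp$ and $b_2=p^{\pi}bp^{\pi}$. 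Thus $a+b=\left(\begin{smallmatrix}a_1+b_1&a_3\\0&a_2+b_2\end{smallmatrix}\right)_{p}$ is exactly of the form treated in Lemma 2.5(2).

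Applying Lemma 2.5(2), $a+b\in\mathcal{A}^{\tiny\textcircled{D}}$ is equivalent to three conditions: $a_1+b_1\in(p\mathcal{A}p)^{\tiny\textcircled{D}}$, $a_2+b_2\in(p^{\pi}\mathcal{A}p^{\pi})^{\tiny\textcircled{D}}$, and the vanishing sum $\sum_{i=1}^{m}(a_1+b_1)^{i-1}(a_1+b_1)^{\pi}a_3(a_2+b_2)^{m-i}=0$. The second is automatic, since $a_2$ is nilpotent, $b_2=p^{\pi}b$ is p-core invertible in the corner (as $b^{\tiny\textcircled{D}}$ commutes with $p$), and a commuting nilpotent perturbation of a p-core invertible element is again p-core invertible, as one sees by a further decomposition of the corner along $b_2b_2^{\tiny\textcircled{D}}$. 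For the first, I factor $a_1+b_1=a_1(p+a_1^{-1}b_1)$; since $a_1$ is invertible in $p\mathcal{A}p$ and commutes and $*$-commutes with $b_1$, Lemma 2.2 reduces $a_1+b_1\in(p\mathcal{A}p)^{\tiny\textcircled{D}}$ to $p+a^{\tiny\textcircled{D}}b\in(p\mathcal{A}p)^{\tiny\textcircled{D}}$, which is the stated spectral condition on $1+a^{\tiny\textcircled{D}}b$. Finally I would verify that the remaining equivalent forms in the statement are re-encodings of this corner condition together with the vanishing sum, and push the argument back through each implication to recover $a+b\in\mathcal{A}^{\tiny\textcircled{D}}$ along with the explicit formula for $(a+b)^{\tiny\textcircled{D}}$ furnished by the block p-core inverse of Lemma 2.5.

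The main obstacle I anticipate is the off-diagonal coupling $a_3=pap^{\pi}$. Because the Hermitian projection $aa^{\tiny\textcircled{D}}$ is not the oblique Drazin projection $a^{D}a$, the element $a$ is only block-triangular, not block-diagonal, so the cross term in Lemma 2.5(2) does not vanish for free and must be controlled; the careful bookkeeping distinguishing $aa^{\tiny\textcircled{D}}$, $a^{\tiny\textcircled{D}}a$ and $aa^{D}$, together with checking that the $*$-conditions survive passage to the corners $p\mathcal{A}p$ and $p^{\pi}\mathcal{A}p^{\pi}$, is where the real work lies.
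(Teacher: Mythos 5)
Your skeleton is the paper's own: the same Hermitian projection $p=aa^{\tiny\textcircled{D}}$, the same observation that $a$ is upper triangular with $a_1=pap=a^2a^{\tiny\textcircled{D}}$ invertible in $p\mathcal{A}p$ (inverse $a^{\tiny\textcircled{D}}$) and $p^{\pi}ap^{\pi}$ nilpotent while $b$ is diagonal, the same use of Lemma 2.5(2) as the engine, and the same secondary decomposition along $b_2b_2^{\tiny\textcircled{D}}$ for the $p^{\pi}$-corner. The genuine gap is at the one step you dispose of by citing Lemma 2.2. Applying Lemma 2.2 to the factorization $a_1+b_1=a_1(p+a_1^{-1}b_1)$ requires the two \emph{factors} to commute and $*$-commute; commuting is fine, but $*$-commuting means $a_1^*(a_1^{-1}b_1)=(a_1^{-1}b_1)a_1^*$, and since $a_1^*b_1=b_1a_1^*$ this amounts to $[a_1^*,a_1^{-1}]\,b_1=0$, i.e.\ (already when $b_1$ is invertible in the corner) to $a_1$ being \emph{normal}. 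Nothing in the hypotheses forces $a^2a^{\tiny\textcircled{D}}$ to be normal: take $a$ invertible and non-normal, so $p=1$, $a_1=a$, and $a^*$ does not commute with $a^{\tiny\textcircled{D}}=a^{-1}$. The reverse factorization $p+a_1^{-1}b_1=a_1^{-1}(a_1+b_1)$ fails for the same reason, and you need both directions, since the theorem is an equivalence while Lemma 2.2 only passes p-core invertibility from factors to products. This is exactly the point where the paper does its real work instead of quoting Lemma 2.2: for $(1)\Rightarrow(2)$ it builds a $(1,3)$-inverse of $[(a+b)a^{\tiny\textcircled{D}}]^m$ by hand from one of $[(a+b)aa^{\tiny\textcircled{D}}]^m$ (this is where the Hermitian-ness of $aa^{\tiny\textcircled{D}}$ is genuinely used), concludes $(a+b)a^{\tiny\textcircled{D}}\in\mathcal{A}^{\tiny\textcircled{D}}$ from Drazin invertibility plus that $(1,3)$-inverse via \cite[Theorem 2.3]{GC}, and then reaches $1+a^{\tiny\textcircled{D}}b=(1-aa^{\tiny\textcircled{D}})+(a+b)a^{\tiny\textcircled{D}}$ through the orthogonal-sum Lemma 2.3; for $(2)\Rightarrow(1)$ it runs the symmetric $(1,3)$-inverse construction for $[(1+a^{\tiny\textcircled{D}}b)a^2a^{\tiny\textcircled{D}}]^k$ out of $(1+a^{\tiny\textcircled{D}}b)^{\tiny\textcircled{D}}$, using \cite[Lemma 2]{ZH} to compute the spectral idempotent $(a_1+b_1)^{\pi}$. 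Your proposal contains no substitute for this work.

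Two smaller inaccuracies. First, Lemma 2.5(2) is not an equivalence for ``$a+b\in\mathcal{A}^{\tiny\textcircled{D}}$'' alone: its left-hand side is ``$a+b\in\mathcal{A}^{\tiny\textcircled{D}}$ \emph{and} $p^{\pi}(a+b)^{\tiny\textcircled{D}}p=0$'', and the second clause is not automatic; it is precisely what becomes the condition $a^{\pi}(a+b)^{\tiny\textcircled{D}}aa^{\tiny\textcircled{D}}=0$ in item (1) of the theorem, after the Lemma 2.4-type identification of annihilation by $1-aa^{\tiny\textcircled{D}}$ with annihilation by $1-aa^{D}$ --- a translation your plan also leaves untreated. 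As phrased, anchoring at ``$a+b\in\mathcal{A}^{\tiny\textcircled{D}}$'' proves an equivalence for a different statement. Second, your claim that a commuting nilpotent perturbation of a p-core invertible element is p-core invertible needs commuting \emph{and} $*$-commuting (otherwise the nilpotent part need not commute with $b_2b_2^{\tiny\textcircled{D}}$, and your secondary decomposition is unavailable); in context both hold, so this one is only a matter of stating the correct hypothesis.
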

\begin{enumerate}
\item [(1)] $a+b\in \mathcal{A}^{\tiny\textcircled{D}}$ and $a^{\pi}(a+b)^{\tiny\textcircled{D}}aa^{\tiny\textcircled{D}}=0$.
\vspace{-.5mm}
\item [(2)] $1+a^{\tiny\textcircled{D}}b\in \mathcal{A}^{\tiny\textcircled{D}}$ and  $$\sum\limits_{i=1}^{m}[1+a^{\tiny\textcircled{D}}b]^{i-1}a^{i-1}[1+a^{\tiny\textcircled{D}}b]^{\pi}a[aa^{\tiny\textcircled{D}}-a^{\tiny\textcircled{D}}a](a+b)^{m-i}=0$$ for some $m\geq i(1+a^{\tiny\textcircled{D}}b)$.
\end{enumerate}
\begin{proof} Since $ab=ba$ and $a^*b=ba^*$, it follows by Lemma 2.1 that $a^{\tiny\textcircled{D}}b=ba^{\tiny\textcircled{D}}$.
Let $p=aa^{\tiny\textcircled{D}}$. Then $p^{\pi}bp=(1-aa^{\tiny\textcircled{D}})baa^{\tiny\textcircled{D}}=(1-aa^{\tiny\textcircled{D}})aa^{\tiny\textcircled{D}}b=0$. Moreover, we have
$pbp^{\pi}= aa^{\tiny\textcircled{D}}b(1-aa^{\tiny\textcircled{D}})=aba^{\tiny\textcircled{D}}(1-aa^{\tiny\textcircled{D}}=0$. We compute that $$\begin{array}{rll}
p^{\pi}ap&=&(1-aa^{\tiny\textcircled{D}})aaa^{\tiny\textcircled{D}}\\
&=&(1-aa^{\tiny\textcircled{D}})aa^D(a^m)(a^m)^{(1,3)}\\
&=&(1-aa^{\tiny\textcircled{D}})a^{m+1}(a^D)^m(a^m)(a^m)^{(1,3)}\\
&=&(a^{m+1}-aa^{\tiny\textcircled{D}}a^{m+1})(a^D)^m(a^m)(a^m)^{(1,3)}\\
&=&0.
\end{array}$$
So we get $$a=\left(
\begin{array}{cc}
a_1&a_2\\
0&a_4
\end{array}
\right)_p, b=\left(
\begin{array}{cc}
b_1&0\\
0&b_4
\end{array}
\right)_p.$$
Hence $$a+b=\left(
\begin{array}{cc}
a_1+b_1&a_2\\
0&a_4+b_4
\end{array}
\right)_p.$$

$(i)$ Clearly, we check that $$\begin{array}{rll}
(1-a^{\tiny\textcircled{D}}a)a^2a^{\tiny\textcircled{D}}&=&(1-a^{\tiny\textcircled{D}}a)a^2a^Da^m(a^m)^{(1,3)}\\
&=&(1-a^{\tiny\textcircled{D}}a)a^{m+2}a^D(a^m)^{(1,3)}\\
&=&(a^{m}-a^{\tiny\textcircled{D}}a^{m+1})a^2a^D(a^m)^{(1,3)}\\
&=&0.
\end{array}$$ In view of Lemma 2.4, $(1-aa^{\tiny\textcircled{D}})a^2a^{\tiny\textcircled{D}}=0$.
Hence $$a_1=aa^{\tiny\textcircled{D}}aaa^{\tiny\textcircled{D}}=a^2a^{\tiny\textcircled{D}}.$$

Obviously, $(1-aa^{\tiny\textcircled{D}})baa^{\tiny\textcircled{D}}=b(1-aa^{\tiny\textcircled{D}})aa^{\tiny\textcircled{D}}=0$.
It follows by Lemma 2.4 that $(1-a^{\tiny\textcircled{D}}a)baa^{\tiny\textcircled{D}}=0$. Hence we have $b_1=aa^{\tiny\textcircled{D}}baa^{\tiny\textcircled{D}}=baa^{\tiny\textcircled{D}}=a^{\tiny\textcircled{D}}abaa^{\tiny\textcircled{D}}=a^{\tiny\textcircled{D}}ba^2a^{\tiny\textcircled{D}}$, and then
$$a_1+b_1=(1+a^{\tiny\textcircled{D}}b)a^2a^{\tiny\textcircled{D}}\in \mathcal{A}^{\tiny\textcircled{D}}.$$
This implies that
$$(a_1+b_1)^{i-1}=(1+a^{\tiny\textcircled{D}}b)^{i-1}(a^2a^{\tiny\textcircled{D}})^{i-1}=(1+a^{\tiny\textcircled{D}}b)^{i-1}a^ia^{\tiny\textcircled{D}}.$$
Furthermore, $$(a_1+b_1)^D=(1+a^{\tiny\textcircled{D}}b)^Da^{\tiny\textcircled{D}}.$$ Thus $$(a_1+b_1)^{\pi}=1-(1+a^{\tiny\textcircled{D}}b)(1+a^{\tiny\textcircled{D}}b)^Daa^{\tiny\textcircled{D}}.$$

$(ii)$ Clearly, we have $(1-aa^{\tiny\textcircled{D}})aaa^{\tiny\textcircled{D}}=a^2a^{\tiny\textcircled{D}}-aa^{\tiny\textcircled{D}}aaa^{\tiny\textcircled{D}}=0.$ Then
$$a_4=(1-aa^{\tiny\textcircled{D}})a(1-aa^{\tiny\textcircled{D}})=(1-aa^{\tiny\textcircled{D}})a.$$ Hence $a_4^{m+1}=(1-aa^{\tiny\textcircled{D}})a^{m+1}=0$, and so
$a_4\in \mathcal{A}^{nil}$. Moreover, $$b_4=(1-aa^{\tiny\textcircled{D}})b(1-aa^{\tiny\textcircled{D}})=(1-aa^{\tiny\textcircled{D}})b.$$
Since $bp^{\pi}=p^{\pi}b, b^*p^{\pi}=(p^{\pi}b)^*=(bp^{\pi})^*=p^{\pi}b^*$. In light of Lemma 2.2, $b_4=p^{\pi}b\in \mathcal{A}^{\tiny\textcircled{D}}$ and $b_4^{\tiny\textcircled{D}}=p^{\pi}b^{\tiny\textcircled{D}}$.

$$a_4+b_4=(1-aa^{\tiny\textcircled{D}})(a+b)$$
$$(a_4+b_4)^{m-i}=(1-aa^{\tiny\textcircled{D}})(a+b)^{m-i}.$$

$(1)\Rightarrow (2)$ We have $$(a+b)^{\tiny\textcircled{D}}=\left(
\begin{array}{cc}
\alpha &\beta\\
0&\gamma
\end{array}
\right)_p.$$
Then $[p(a+b)p]^{\tiny\textcircled{D}}=\alpha $. That is, $(a+b)aa^{\tiny\textcircled{D}}\in \mathcal{A}^{\tiny\textcircled{D}}$.

We observe that $$\begin{array}{rll}
1+a^{\tiny\textcircled{D}}b&=&[1-aa^{\tiny\textcircled{D}}]+[aa^{\tiny\textcircled{D}}+a^{\tiny\textcircled{D}}b]\\
&=&[1-aa^{\tiny\textcircled{D}}]+[aa^{\tiny\textcircled{D}}+ba^{\tiny\textcircled{D}}]\\
&=&[1-aa^{\tiny\textcircled{D}}]+ [a+b]a^{\tiny\textcircled{D}}
\end{array}$$

We easily check that $[(a+b)aa^{\tiny\textcircled{D}}]a^{\tiny\textcircled{D}}=a^{\tiny\textcircled{D}}[(a+b)aa^{\tiny\textcircled{D}}]$. In view of Lemma 2.2,
$(a+b)a^{\tiny\textcircled{D}}=[(a+b)aa^{\tiny\textcircled{D}}]a^{\tiny\textcircled{D}}\in \mathcal{A}^{D}$. Then

$$[(a+b)aa^{\tiny\textcircled{D}}]^m=[(a+b)aa^{\tiny\textcircled{D}}]^my[(a+b)aa^{\tiny\textcircled{D}}]^m,$$
$$\big([(a+b)aa^{\tiny\textcircled{D}}]^my\big)^*=[(a+b)aa^{\tiny\textcircled{D}}]^my.$$
By induction, we have $$[(a+b)a^{\tiny\textcircled{D}}]^m[a^2a^{\tiny\textcircled{D}}]^m=[(a+b)aa^{\tiny\textcircled{D}}]^m.$$
We verify that
$$\begin{array}{rl}
&[(a+b)a^{\tiny\textcircled{D}}]^m[(a^2a^{\tiny\textcircled{D}})^my][(a+b)a^{\tiny\textcircled{D}}]^m[a^2a^{\tiny\textcircled{D}}]^m\\
=&[(a+b)aa^{\tiny\textcircled{D}}]^my[(a+b)aa^{\tiny\textcircled{D}}]^m\\
=&[(a+b)aa^{\tiny\textcircled{D}}]^m\\
=&[(a+b)a^{\tiny\textcircled{D}}]^m[a^2a^{\tiny\textcircled{D}}]^m.
\end{array}$$
Clearly, $[a^2a^{\tiny\textcircled{D}}]^m(a^{\tiny\textcircled{D}})^m=aa^{\tiny\textcircled{D}}$. Then
$$\begin{array}{rl}
&[(a+b)a^{\tiny\textcircled{D}}]^m[(a^2a^{\tiny\textcircled{D}})^my][(a+b)a^{\tiny\textcircled{D}}]^m\\
=&[(a+b)a^{\tiny\textcircled{D}}]^m,\\
&[(((a+b)a^{\tiny\textcircled{D}})^m(a^2a^{\tiny\textcircled{D}})^my)]^*\\
=&[((a+b)aa^{\tiny\textcircled{D}})^my]^*\\
=&((a+b)aa^{\tiny\textcircled{D}})^my\\
=&[(a+b)a^{\tiny\textcircled{D}}]^m(a^2a^{\tiny\textcircled{D}})^my.
\end{array}$$
Therefore $[(a+b)a^{\tiny\textcircled{D}}]^m$ has $(1,3)$-inverse $(a^2a^{\tiny\textcircled{D}})^my$.
By virtue of~\cite[Theorem 2.3]{GC}, $(a+b)a^{\tiny\textcircled{D}}\in \mathcal{A}^{\tiny\textcircled{D}}$. Obviously, we have $$[1-aa^{\tiny\textcircled{D}}][a+b]a^{\tiny\textcircled{D}}=[1-aa^{\tiny\textcircled{D}}]^*[a+b]a^{\tiny\textcircled{D}}=0.$$
According to Lemma 2.3, $1+a^{\tiny\textcircled{D}}b\in \mathcal{A}^{\tiny\textcircled{D}}.$
Moreover, we have $$\sum\limits_{i=1}^{m}[a_1+b_1]^{i-1}(a_1+b_1)^{\pi}a_2(a_4+b_4)^{m-i}=0$$ for some $m\geq i(a_1+b_1)$.
Therefore $$\sum\limits_{i=1}^{m}[1+a^{\tiny\textcircled{D}}b]^{i-1}a^{i-1}[1+a^{\tiny\textcircled{D}}b]^{\pi}a[aa^{\tiny\textcircled{D}}-a^{\tiny\textcircled{D}}a](a+b)^{m-i}=0$$ for some $m\geq i(1+a^{\tiny\textcircled{D}}b)$.

$(2)\Rightarrow (1)$ Let $x=(1+a^{\tiny\textcircled{D}}b)^{\tiny\textcircled{D}}$. Since $(1+a^{\tiny\textcircled{D}}b)aa^{\tiny\textcircled{D}}=aa^{\tiny\textcircled{D}}(1+a^{\tiny\textcircled{D}}b)$ and $(aa^{\tiny\textcircled{D}})^*=aa^{\tiny\textcircled{D}}$, we have $$aa^{\tiny\textcircled{D}}(1+a^{\tiny\textcircled{D}}b)^*=(1+a^{\tiny\textcircled{D}}b)^*aa^{\tiny\textcircled{D}}.$$
In light of Lemma 2.1, we get $aa^{\tiny\textcircled{D}}x=xaa^{\tiny\textcircled{D}}.$

Step 1. It is easy to verify that $$\begin{array}{rll}
(a^2a^{\tiny\textcircled{D}})a^{\tiny\textcircled{D}}&=&aa^{\tiny\textcircled{D}}=a^{\tiny\textcircled{D}}(a^2a^{\tiny\textcircled{D}}),\\
a^{\tiny\textcircled{D}}(a^2a^{\tiny\textcircled{D}})a^{\tiny\textcircled{D}}&=&a^{\tiny\textcircled{D}}(aa^{\tiny\textcircled{D}})=a^{\tiny\textcircled{D}},\\
(a^2a^{\tiny\textcircled{D}})a^{\tiny\textcircled{D}}(a^2a^{\tiny\textcircled{D}})&=&(aa^{\tiny\textcircled{D}})(a^2a^{\tiny\textcircled{D}})
=a^2a^{\tiny\textcircled{D}}
\end{array}$$
Thus $a^2a^{\tiny\textcircled{D}}\in \mathcal{A}^{D}$. In view of Theorem 1.1, $1+a^{\tiny\textcircled{D}}b\in \mathcal{A}^{D}$.
Since $(1+a^{\tiny\textcircled{D}}b)a^2a^{\tiny\textcircled{D}}=(a+b)aa^{\tiny\textcircled{D}}=aa^{\tiny\textcircled{D}}(a+b)=a^2a^{\tiny\textcircled{D}}(1+a^{\tiny\textcircled{D}}b)$, it follows by ~\cite[Lemma 2]{ZH} that
$(1+a^{\tiny\textcircled{D}}b)a^2a^{\tiny\textcircled{D}}\in \mathcal{A}^{D}$ and $$
\begin{array}{rl}
&[(a+b)aa^{\tiny\textcircled{D}}]^{\pi}\\
=&[(1+a^{\tiny\textcircled{D}}b)a^2a^{\tiny\textcircled{D}}]^{\pi}\\
=&1-(1+a^{\tiny\textcircled{D}}b)a^2a^{\tiny\textcircled{D}}(1+a^{\tiny\textcircled{D}}b)^{D}a^{\tiny\textcircled{D}}\\
=&1-(1+a^{\tiny\textcircled{D}}b)(1+a^{\tiny\textcircled{D}}b)^{D}aa^{\tiny\textcircled{D}}.
\end{array}$$

Step 2. We check that
$$\begin{array}{rl}
&[(1+a^{\tiny\textcircled{D}}b)a^2a^{\tiny\textcircled{D}}]^k[(a^{\tiny\textcircled{D}})^kx]\\
=&[(1+a^{\tiny\textcircled{D}}b)]^k[a^2a^{\tiny\textcircled{D}}]^k[(a^{\tiny\textcircled{D}})^kx]\\
=&[(1+a^{\tiny\textcircled{D}}b)]^kx][aa^{\tiny\textcircled{D}}]\\
\end{array}$$ Hence,
$$\begin{array}{rl}
&\big([(1+a^{\tiny\textcircled{D}}b)a^2a^{\tiny\textcircled{D}}]^k[(a^{\tiny\textcircled{D}})^kx]\big)^*\\
=&[(1+a^{\tiny\textcircled{D}}b)]^kx]^*[aa^{\tiny\textcircled{D}}]^*\\
=&[(1+a^{\tiny\textcircled{D}}b)]^kx][aa^{\tiny\textcircled{D}}]\\
=&[(1+a^{\tiny\textcircled{D}}b)a^2a^{\tiny\textcircled{D}}]^k[(a^{\tiny\textcircled{D}})^kx].
\end{array}$$ Moreover, we have
$$\begin{array}{rl}
&[(1+a^{\tiny\textcircled{D}}b)a^2a^{\tiny\textcircled{D}}]^k[(a^{\tiny\textcircled{D}})^kx][(1+a^{\tiny\textcircled{D}}b)a^2a^{\tiny\textcircled{D}}]^k\\
=&[(1+a^{\tiny\textcircled{D}}b)]^kx][aa^{\tiny\textcircled{D}}][(1+a^{\tiny\textcircled{D}}b)a^2a^{\tiny\textcircled{D}}]^k\\
=&[(1+a^{\tiny\textcircled{D}}b)]^kx[(1+a^{\tiny\textcircled{D}}b]^k[aa^{\tiny\textcircled{D}}][a^2a^{\tiny\textcircled{D}}]^k\\
=&[(1+a^{\tiny\textcircled{D}}b)]^k[a^2a^{\tiny\textcircled{D}}]^k\\
=&[(1+a^{\tiny\textcircled{D}}b)a^2a^{\tiny\textcircled{D}}]^k.
\end{array}$$
Accordingly, $(a+b)aa^{\tiny\textcircled{D}}=(1+a^{\tiny\textcircled{D}}b)a^2a^{\tiny\textcircled{D}}\in \mathcal{A}^{\tiny\textcircled{D}}.$

Step 3. Case 1. $b_4\in \mathcal{A}^{nil}$. It is easy to verify that $a_4b_4=(1-aa^{\tiny\textcircled{D}})ab=(1-aa^{\tiny\textcircled{D}})ba=b_4a_4$. Therefore $a_4+b_4\in \mathcal{A}^{nil}\subseteq \mathcal{A}^{\tiny\textcircled{D}}.$

Case 2. $b_4\not \in \mathcal{A}^{nil}$.

Let $q=b_4b_4^{\tiny\textcircled{D}}$. Then $p^{\pi}bp=(1-aa^{\tiny\textcircled{D}})baa^{\tiny\textcircled{D}}=(1-aa^{\tiny\textcircled{D}})aba^{\tiny\textcircled{D}}=0$. Similarly,
$pbp^{\pi}=0$. Moreover, $$\begin{array}{rll}
p^{\pi}ap&=&(1-aa^{\tiny\textcircled{D}})aaa^{\tiny\textcircled{D}}\\
&=&(1-aa^{\tiny\textcircled{D}})a(aa^{\tiny\textcircled{D}})^m\\
&=&(1-aa^{\tiny\textcircled{D}})a^{m+1}a^{\tiny\textcircled{D}}\\
&=&0.
\end{array}$$
So we get $$a_4=\left(
\begin{array}{cc}
c_1&0\\
0&c_4
\end{array}
\right)_q, b_4=\left(
\begin{array}{cc}
d_1&d_2\\
0&d_4
\end{array}
\right)_q.$$
Hence $a_4+b_4=\left(
\begin{array}{cc}
c_1+d_1&d_2\\
0&c_4+d_4
\end{array}
\right)_q.$ Here $a_1+b_1=(a+b)aa^{\tiny\textcircled{D}}, a_4+b_4=p^{\pi}(a+b)p^{\pi}=p^{\pi}(a+b).$

Step 1. $c_1\in \mathcal{A}^{nil}, b_1\in \mathcal{A}^{\tiny\textcircled{D}}$. By the preceding discussion, we have $c_1+d_1\in \mathcal{A}^{\tiny\textcircled{D}}.$

Step 2. Clearly, $(1-aa^{\tiny\textcircled{D}})a(1-aa^{\tiny\textcircled{D}})=(1-aa^{\tiny\textcircled{D}})a$. Hence,
$a_4\in \mathcal{A}^{nil}$. As $(1-aa^{\tiny\textcircled{D}})b(1-aa^{\tiny\textcircled{D}})=(1-aa^{\tiny\textcircled{D}})b$, we see that $b_4\in \mathcal{A}^{nil}$.
Moreover, $a_4b_4=(1-aa^{\tiny\textcircled{D}})ab=(1-aa^{\tiny\textcircled{D}})ba=b_4a_4$. Therefore $a_4+b_4\in \mathcal{A}^{nil}.$

Step 3. $$\begin{array}{ll}
&\sum\limits_{i=1}^{m}(c_1+d_1)^{i-1}(c_1+d_1)^{\pi}d_2(c_4+d_4)^{m-i}\\
=&(1-bb^{\tiny\textcircled{D}})(1-aa^{\tiny\textcircled{D}})\sum\limits_{i=1}^{m}[1+a^{\tiny\textcircled{D}}b]^{i-1}a^{i-1}[1+a^{\tiny\textcircled{D}}b]^{\pi}\\
&a[aa^{\tiny\textcircled{D}}-a^{\tiny\textcircled{D}}a](a+b)^{m-i}\\
=&0.
\end{array}$$
Therefore $a_4+b_4\in \mathcal{A}^{\tiny\textcircled{D}}.$ Moreover, we have $$\begin{array}{rl}
&\sum\limits_{i=1}^{m}(a_1+b_1)^{i-1}(a_1+b_1)^{\pi}a_2(a_4+b_4)^{m-i}\\
=&\sum\limits_{i=1}^{m}[(1+a^{\tiny\textcircled{D}}b)^{i-1}a^ia^{\tiny\textcircled{D}}][1-(1+a^{\tiny\textcircled{D}}b)(1+a^{\tiny\textcircled{D}}b)^{\#}aa^{\tiny\textcircled{D}}]
aa^{\tiny\textcircled{D}}a\\
&(1-aa^{\tiny\textcircled{D}})(a+b)^{m-i}\\
=&-\sum\limits_{i=1}^{m}[1+a^{\tiny\textcircled{D}}b]^{i-1}a^{i-1}[1+a^{\tiny\textcircled{D}}b]^{\pi}a[aa^{\tiny\textcircled{D}}-a^{\tiny\textcircled{D}}a](a+b)^{m-i}\\
=&0.
\end{array}$$
Therefore $a+b\in \mathcal{A}^{\tiny\textcircled{D}}$. Moreover, we have $p^{\pi}(a+b)^{\tiny\textcircled{D}}p=0$. In view of Lemma 2.4,  $a^{\pi}(a+b)^{\tiny\textcircled{D}}aa^{\tiny\textcircled{D}}=0$. This completes the proof.
\end{proof}

Recall that $a\in \mathcal{A}$ be *-DMP, if there exists some $n\in {\Bbb N}$ such that $a^n$ has More-Penrose and group inverses and $(a^n)^{\dag}=(a^n)^{\#}$ (see~\cite{CZ}). We now derive

\begin{cor} Let $a\in \mathcal{A}$ be *-DMP, $b\in \mathcal{A}^{\tiny\textcircled{D}}$. If $ab=ba$ and $a^*b=ba^*$, then the following are equivalent:\end{cor}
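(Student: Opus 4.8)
The plan is to deduce the corollary directly from Theorem 3.1 by showing that the $*$-DMP hypothesis on $a$ forces both auxiliary conditions appearing in Theorem 3.1 to hold automatically, so that the equivalence reduces to the clean statement $a+b\in\mathcal{A}^{\tiny\textcircled{D}}$ if and only if $1+a^{\tiny\textcircled{D}}b\in\mathcal{A}^{\tiny\textcircled{D}}$.

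First I would record the basic consequences of $a$ being $*$-DMP. By definition there is some $n$ with $(a^n)^{\dag}=(a^n)^{\#}$, so $a^n$ is EP and has a group inverse; an EP element with group inverse has core inverse coinciding with its group inverse, whence $a^n\in\mathcal{A}^{\tiny\textcircled{\#}}$ with $(a^n)^{\tiny\textcircled{\#}}=(a^n)^{\#}=(a^D)^n$. By Theorem 1.1(4) this already gives $a\in\mathcal{A}^{\tiny\textcircled{D}}$. Using the relation $a^{\tiny\textcircled{D}}=a^{n-1}(a^n)^{\tiny\textcircled{\#}}$ (as in the proof of Lemma 2.5), I would compute $a^{\tiny\textcircled{D}}=a^{n-1}(a^D)^n$, and since $a$ commutes with $a^D$ and $aa^D$ is idempotent, $aa^{\tiny\textcircled{D}}=(aa^D)^n=aa^D$ and, symmetrically, $a^{\tiny\textcircled{D}}a=aa^D$. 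Thus the $*$-DMP condition yields the two key identities $aa^{\tiny\textcircled{D}}=a^{\tiny\textcircled{D}}a$ and $a^{\pi}=1-aa^D=1-aa^{\tiny\textcircled{D}}$.

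With these in hand the second condition of Theorem 3.1 collapses: every term of the sum in (2) carries the factor $[aa^{\tiny\textcircled{D}}-a^{\tiny\textcircled{D}}a]=0$, so the sum vanishes identically and (2) reduces to $1+a^{\tiny\textcircled{D}}b\in\mathcal{A}^{\tiny\textcircled{D}}$. For the first condition I would argue that the off-diagonal requirement is automatic. Put $p=aa^{\tiny\textcircled{D}}$, a projection commuting with $a$ by $aa^{\tiny\textcircled{D}}=a^{\tiny\textcircled{D}}a$. Since $ab=ba$ and $a^*b=ba^*$, Lemma 2.1 gives $a^{\tiny\textcircled{D}}b=ba^{\tiny\textcircled{D}}$, so $p$ also commutes with $b$ and hence with $a+b$. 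Relative to $p$ both $a$ and $b$, and therefore $a+b$, are block-diagonal, and one checks (e.g. via Lemma 2.5 with zero off-diagonal entry) that $(a+b)^{\tiny\textcircled{D}}$ is block-diagonal as well; consequently $a^{\pi}(a+b)^{\tiny\textcircled{D}}aa^{\tiny\textcircled{D}}=p^{\pi}(a+b)^{\tiny\textcircled{D}}p=0$ whenever $a+b\in\mathcal{A}^{\tiny\textcircled{D}}$. Hence condition (1) of Theorem 3.1 reduces to $a+b\in\mathcal{A}^{\tiny\textcircled{D}}$, and combining the two reductions gives the asserted equivalence.

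I expect the main obstacle to be the first step: cleanly extracting from $(a^n)^{\dag}=(a^n)^{\#}$ the commutativity $aa^{\tiny\textcircled{D}}=a^{\tiny\textcircled{D}}a$ together with the coincidence $aa^{\tiny\textcircled{D}}=aa^D$ of the core-EP and Drazin projections, since it is precisely this pair of identities that makes both auxiliary conditions disappear. Once they are established, the remaining reductions are routine bookkeeping on top of Theorem 3.1.
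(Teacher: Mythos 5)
Your proposal follows essentially the same route as the paper: the paper's entire proof consists of quoting \cite[Theorem 2.10]{GCK} for the identity $aa^{\tiny\textcircled{D}}=a^{\tiny\textcircled{D}}a$ and then stating that the result follows from Theorem 3.1. What you add is (i) a self-contained derivation of that identity in place of the citation --- from $(a^n)^{\dag}=(a^n)^{\#}$ you get $(a^n)^{\tiny\textcircled{\#}}=(a^n)^{\#}=(a^D)^n$, hence $a^{\tiny\textcircled{D}}=a^{n-1}(a^n)^{\tiny\textcircled{\#}}=a^D$ and so $aa^{\tiny\textcircled{D}}=a^{\tiny\textcircled{D}}a=aa^D$ --- and (ii) the explicit verification, which the paper leaves implicit, that both side conditions of Theorem 3.1 collapse. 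Your step (i) is sound (note that $a^n$ being group invertible forces $i(a)\leq n$, so the formula $a^{\tiny\textcircled{D}}=a^{n-1}(a^n)^{\tiny\textcircled{\#}}$ does apply), and the collapse of condition (2) is immediate since every summand carries the factor $aa^{\tiny\textcircled{D}}-a^{\tiny\textcircled{D}}a=0$.

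The one justification that needs repair is your claim that the off-diagonal condition in (1) is automatic ``e.g.\ via Lemma 2.5 with zero off-diagonal entry.'' Lemma 2.5 does not deliver this: its forward implication takes $p^{\pi}x^{\tiny\textcircled{D}}p=0$ as a \emph{hypothesis}, and its backward implication requires knowing in advance that the corners $p(a+b)p$ and $p^{\pi}(a+b)p^{\pi}$ are p-core invertible in $p\mathcal{A}p$ and $p^{\pi}\mathcal{A}p^{\pi}$, which you have not established at that point. The correct tool is Lemma 2.1, which you already invoke in the same paragraph: apply it to the pair $a+b$ and $p=aa^{\tiny\textcircled{D}}$. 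Indeed $(a+b)p=p(a+b)$, and applying the involution to this identity (using $p^*=p$) gives $(a+b)^*p=p(a+b)^*$; since a projection is trivially p-core invertible, Lemma 2.1 yields $(a+b)^{\tiny\textcircled{D}}p=p(a+b)^{\tiny\textcircled{D}}$, whence
$$p^{\pi}(a+b)^{\tiny\textcircled{D}}p=p^{\pi}p(a+b)^{\tiny\textcircled{D}}=0,$$
which is exactly $a^{\pi}(a+b)^{\tiny\textcircled{D}}aa^{\tiny\textcircled{D}}=0$ because $a^{\pi}=1-aa^D=1-aa^{\tiny\textcircled{D}}=p^{\pi}$ by your step (i). With this substitution your argument is complete and correct.
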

\begin{enumerate}
\item [(1)] $a+b\in \mathcal{A}^{\tiny\textcircled{D}}$.
\vspace{-.5mm}
\item [(2)] $1+a^{\tiny\textcircled{D}}b\in \mathcal{A}^{\tiny\textcircled{D}}$.
\end{enumerate}
\begin{proof} Since $a\in \mathcal{A}$ is *-DMP, it follows by~\cite[Theorem 2.10]{GCK} that $aa^{\tiny\textcircled{D}}=a^{\tiny\textcircled{D}}a$. The result follows by Theorem 3.1.\end{proof}

The preceding conditions $ab=ba$ and $a^*b=ba^*$ are necessary as the following shows.

\begin{exam} Let $\mathcal{A}={\Bbb C}^{2\times 2}$ be the Banach *-algebra of $2\times 2$ complex matrices, with conjugate transpose as the involution. Choose $$a=\left(
  \begin{array}{cc}
    i&0\\
    0&0
  \end{array}
\right), b=\left(
  \begin{array}{cc}
    0&0\\
    1&0
  \end{array}
\right)\in \mathcal{A}.$$ Then $a$ is *-DMP, $a^{\tiny\textcircled{D}}=\left(
  \begin{array}{cc}
    -i&0\\
    0&0
  \end{array}
\right), b^{\tiny\textcircled{D}}=0$ and $1+a^{\tiny\textcircled{D}}b=1\in \mathcal{A}^{\tiny\textcircled{D}}.$
Then $a+b=\left(
  \begin{array}{cc}
    i&0\\
    1&0
  \end{array}
\right)\not\in \mathcal{A}^{\tiny\textcircled{D}}.$ In this case, $ab\neq ba$.\end{exam}

\section{applications}

Lex $X$ and $Y$ be Banach spaces. Denote by $\mathcal{B}(X,Y)$ the set of all bounded linear operators from $X$ to $Y$. Let $\mathcal{B}(X)$ denote the set of all bounded linear operators from $X$ to itself. The aim of this section is to explore the the p-core invertibility of a block-operator matrix $M=\left(
\begin{array}{cc}
A&B\\
C&D
\end{array}
\right),$ where $A\in \mathcal{B}(X)^{\tiny\textcircled{D}},B\in \mathcal{B}(X,Y),C\in \mathcal{B}(Y,X),D\in \mathcal{B}(Y)^{\tiny\textcircled{D}}$ and $BC\in \mathcal{B}(X)^{\tiny\textcircled{D}}, CB\in \mathcal{B}(Y)^{\tiny\textcircled{D}}$. Here, $M$ is a bounded linear operator on $X\oplus Y$. For the detailed formula of $M^{\tiny\textcircled{D}}$, we leave to the readers as they can be derived by the straightforward computation according to our proof.

\begin{thm} If $AB=BD, DC=CA, A^*B=BD^*, D^*C=CA^*$ and $A^{\tiny\textcircled{D}}BD^{\tiny\textcircled{D}}C$ is nilpotent, then $M$ has p-core inverse.\end{thm}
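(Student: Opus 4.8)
The plan is to realise $M$ as a sum $M=P+Q$ of two p-core invertible operators to which Theorem 3.1 applies, where
$$P=\left(\begin{array}{cc} A&0\\ 0&D\end{array}\right),\qquad Q=\left(\begin{array}{cc} 0&B\\ C&0\end{array}\right)$$
are regarded as elements of the Banach $*$-algebra $\mathcal{A}=\mathcal{B}(X\oplus Y)$ with the operator adjoint as involution. First I would record the two commutation identities that drive everything. A block computation gives $PQ=\left(\begin{smallmatrix}0&AB\\ DC&0\end{smallmatrix}\right)$ and $QP=\left(\begin{smallmatrix}0&BD\\ CA&0\end{smallmatrix}\right)$, so the hypotheses $AB=BD$ and $DC=CA$ say exactly that $PQ=QP$; likewise $P^*=\mathrm{diag}(A^*,D^*)$ gives $P^*Q=\left(\begin{smallmatrix}0&A^*B\\ D^*C&0\end{smallmatrix}\right)$ and $QP^*=\left(\begin{smallmatrix}0&BD^*\\ CA^*&0\end{smallmatrix}\right)$, so $A^*B=BD^*$ and $D^*C=CA^*$ say exactly that $P^*Q=QP^*$. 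Thus $P,Q$ satisfy the standing commutation hypotheses of Theorem 3.1.

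Next I would confirm that $P,Q\in\mathcal{A}^{\tiny\textcircled{D}}$. Taking the diagonal projection $p=\mathrm{diag}(1_X,0)$, Lemma 2.5(2) applied with zero off-diagonal block shows $P\in\mathcal{A}^{\tiny\textcircled{D}}$ (the sum condition being vacuous), and one checks directly that $P^{\tiny\textcircled{D}}=\mathrm{diag}(A^{\tiny\textcircled{D}},D^{\tiny\textcircled{D}})$. For $Q$ the key observation is that $Q^2=\mathrm{diag}(BC,CB)$ is diagonal with p-core invertible blocks $BC\in\mathcal{B}(X)^{\tiny\textcircled{D}}$ and $CB\in\mathcal{B}(Y)^{\tiny\textcircled{D}}$, so $Q^2\in\mathcal{A}^{\tiny\textcircled{D}}$ by the same lemma. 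Then by Theorem 1.1(4) some power $(Q^2)^j=Q^{2j}$ has core inverse, and a second application of Theorem 1.1(4) yields $Q\in\mathcal{A}^{\tiny\textcircled{D}}$.

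The crux is to check the conditions of Theorem 3.1(2) for $a=P$, $b=Q$. Here $P^{\tiny\textcircled{D}}Q=\left(\begin{smallmatrix}0&A^{\tiny\textcircled{D}}B\\ D^{\tiny\textcircled{D}}C&0\end{smallmatrix}\right)$, so $(P^{\tiny\textcircled{D}}Q)^2=\mathrm{diag}\!\big(A^{\tiny\textcircled{D}}BD^{\tiny\textcircled{D}}C,\,D^{\tiny\textcircled{D}}CA^{\tiny\textcircled{D}}B\big)$. The hypothesis makes the first block nilpotent, and if $(A^{\tiny\textcircled{D}}BD^{\tiny\textcircled{D}}C)^n=0$ then $(D^{\tiny\textcircled{D}}CA^{\tiny\textcircled{D}}B)^{n+1}=D^{\tiny\textcircled{D}}C(A^{\tiny\textcircled{D}}BD^{\tiny\textcircled{D}}C)^nA^{\tiny\textcircled{D}}B=0$, so the second block is nilpotent too. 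Hence $(P^{\tiny\textcircled{D}}Q)^2$, and therefore $P^{\tiny\textcircled{D}}Q$, is nilpotent, so $1+P^{\tiny\textcircled{D}}Q$ is invertible. An invertible element is p-core invertible with Drazin index $0$ and vanishing spectral idempotent $[1+P^{\tiny\textcircled{D}}Q]^{\pi}=0$. Consequently the first requirement of Theorem 3.1(2) holds, and since every summand of the sum in Theorem 3.1(2) carries the factor $[1+P^{\tiny\textcircled{D}}Q]^{\pi}=0$, that sum vanishes for $m=1\ge i(1+P^{\tiny\textcircled{D}}Q)$. Theorem 3.1, in the direction $(2)\Rightarrow(1)$, then delivers $M=P+Q\in\mathcal{A}^{\tiny\textcircled{D}}$.

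I expect the main obstacle to be the bookkeeping around $Q$: establishing $Q\in\mathcal{A}^{\tiny\textcircled{D}}$ by passing through $Q^2$ and the double use of Theorem 1.1(4), and verifying that nilpotency of the single block $A^{\tiny\textcircled{D}}BD^{\tiny\textcircled{D}}C$ genuinely forces the whole operator $P^{\tiny\textcircled{D}}Q$ to be nilpotent. Once $1+P^{\tiny\textcircled{D}}Q$ is seen to be invertible, the elaborate condition in Theorem 3.1(2) collapses and the conclusion is automatic, so no further estimates are needed.
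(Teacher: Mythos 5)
Your proposal is correct and follows essentially the same route as the paper: the same splitting $M=P+Q$, the same verification that $PQ=QP$ and $P^*Q=QP^*$, p-core invertibility of $Q$ obtained through the diagonal operator $Q^2=\mathrm{diag}(BC,CB)$, invertibility of $I+P^{\tiny\textcircled{D}}Q$ from the nilpotency hypothesis, and then Theorem 3.1 in the direction $(2)\Rightarrow(1)$. You merely fill in details the paper compresses (passing through powers of $Q$ via Theorem 1.1(4) rather than citing \cite[Theorem 2.6]{GC}, and explicitly deducing nilpotency of $P^{\tiny\textcircled{D}}Q$ from that of its square), so there is nothing to correct.
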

\begin{proof} Write $M=P+Q$, where $$P=\left(
  \begin{array}{cc}
    A & 0 \\
    0 & D
  \end{array}
\right), Q=\left(
  \begin{array}{cc}
   0 & B \\
   C & 0
  \end{array}
\right).$$ Since $A$ and $D$ have p-core inverses, so has $P$, and that $$P^{\tiny\textcircled{D}}=\left(
  \begin{array}{cc}
    A^{\tiny\textcircled{D}} & 0 \\
    0 & D^{\tiny\textcircled{D}}
  \end{array}
\right).$$ By hypothesis, $Q^2=\left(
  \begin{array}{cc}
   BC & 0 \\
   0 & CB
  \end{array}
\right)$ has p-core inverse. In light of~\cite[Theorem 2.6]{GC},
$Q$ has p-core inverse. We easily check that
$$PQ=\left(
  \begin{array}{cc}
   0 & 0 \\
    DC & 0
  \end{array}
\right)=\left(
  \begin{array}{cc}
  0 & 0 \\
    CA & 0
  \end{array}
\right)=QP.$$ Likewise, we verify that $P^*Q=QP^*$. Moreover, we check that
$$\begin{array}{rll}
I_{X\oplus Y}+P^{\tiny\textcircled{D}}Q&=&\left(
\begin{array}{cc}
I_X&A^{\tiny\textcircled{D}}B\\
D^{\tiny\textcircled{D}}C&I_Y
\end{array}
\right).
\end{array}$$ Since $A^{\tiny\textcircled{D}}BD^{\tiny\textcircled{D}}C$ is nilpotent, we prove that
$I_{X\oplus Y}+P^{\tiny\textcircled{D}}Q$ is invertible, and so it has p-core inverse. Additionally, $[I_{X\oplus Y}+P^{\tiny\textcircled{D}}Q]^{\pi}=0$.
According to Theorem 3.1, $M$ has p-core inverse, as asserted.
\end{proof}

Let $T\in \mathcal{B}(X,Y)$. The conjugate operator of $T$ is an operator $T^*\in \mathcal{B}(Y^*,X^*)$, where $Z^*$ denotes the dual space of the Banach space $Z$.
It is easy to see that if $T\in \mathcal{B}(X)^{\tiny\textcircled{D}}$, then $T^*\in \mathcal{B}(X^*)^{\tiny\textcircled{D}}$.

\begin{cor} If $AB=BD, DC=CA, D^*C=CA^*, A^*B=BD^*$ and $BD^{\tiny\textcircled{D}}CA^{\tiny\textcircled{D}}$ is nilpotent, then $M$ has p-core inverse.\end{cor}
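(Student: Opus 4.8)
The plan is to reduce the statement directly to the block-matrix Theorem 4.1. Comparing the two hypotheses, the four identities $AB=BD$, $DC=CA$, $A^*B=BD^*$, $D^*C=CA^*$ are precisely those required there; the only apparent difference is that here nilpotency is imposed on $BD^{\tiny\textcircled{D}}CA^{\tiny\textcircled{D}}$ rather than on $A^{\tiny\textcircled{D}}BD^{\tiny\textcircled{D}}C$. So the first step is to observe that these two operators are a cyclic rearrangement of one another. Writing $u=A^{\tiny\textcircled{D}}$ and $w=BD^{\tiny\textcircled{D}}C$, both of which are bounded operators on $X$, we have $A^{\tiny\textcircled{D}}BD^{\tiny\textcircled{D}}C=uw$ and $BD^{\tiny\textcircled{D}}CA^{\tiny\textcircled{D}}=wu$.

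The second step is to invoke the elementary identity $(wu)^{n+1}=w(uw)^nu$ together with its mirror image $(uw)^{n+1}=u(wu)^nw$; these show that $uw$ is nilpotent if and only if $wu$ is nilpotent. Hence the nilpotency of $BD^{\tiny\textcircled{D}}CA^{\tiny\textcircled{D}}$ assumed in the corollary is equivalent to the nilpotency of $A^{\tiny\textcircled{D}}BD^{\tiny\textcircled{D}}C$, so the hypotheses become exactly those of Theorem 4.1, and applying that theorem yields that $M$ has p-core inverse. At the block level the same fact reads $(P^{\tiny\textcircled{D}}Q)^2=\mathrm{diag}(A^{\tiny\textcircled{D}}BD^{\tiny\textcircled{D}}C,\,D^{\tiny\textcircled{D}}CA^{\tiny\textcircled{D}}B)$ and $(QP^{\tiny\textcircled{D}})^2=\mathrm{diag}(BD^{\tiny\textcircled{D}}CA^{\tiny\textcircled{D}},\,CA^{\tiny\textcircled{D}}BD^{\tiny\textcircled{D}})$, so either hypothesis is equivalent to $P^{\tiny\textcircled{D}}Q$ being nilpotent, which is exactly what makes $I_{X\oplus Y}+P^{\tiny\textcircled{D}}Q$ invertible in the proof of Theorem 4.1.

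A second, more structural route is the one suggested by the conjugate-operator remark preceding the statement. Here one forms the conjugate operator $M^*=\left(\begin{smallmatrix}A^*&C^*\\ B^*&D^*\end{smallmatrix}\right)$ and applies Theorem 4.1 to it with the blocks $\widetilde A=A^*$, $\widetilde B=C^*$, $\widetilde C=B^*$, $\widetilde D=D^*$. Taking adjoints of the four given identities turns them into exactly the four identities required of $M^*$: for instance $DC=CA$ gives $C^*D^*=A^*C^*$, i.e. $\widetilde A\widetilde B=\widetilde B\widetilde D$, and $A^*B=BD^*$ gives $B^*A=DB^*$, i.e. $\widetilde D^*\widetilde C=\widetilde C\widetilde A^*$; the remaining two are analogous. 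One then transfers p-core invertibility from $M^*$ back to $M$ via the remark.

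The main obstacle, and the reason I would prefer the first route, lies in this second one: verifying the nilpotency hypothesis for $M^*$ forces one to rewrite $\widetilde A^{\tiny\textcircled{D}}\widetilde B\widetilde D^{\tiny\textcircled{D}}\widetilde C=(A^*)^{\tiny\textcircled{D}}C^*(D^*)^{\tiny\textcircled{D}}B^*$ in terms of the original data, which requires relating $(A^*)^{\tiny\textcircled{D}}$ to $A^{\tiny\textcircled{D}}$. Since the defining equations of the p-core inverse are not symmetric under the involution (it is the one-sided core-type inverse), one does not have $(A^*)^{\tiny\textcircled{D}}=(A^{\tiny\textcircled{D}})^*$ in general, so this route needs both directions of the conjugate-operator remark and careful bookkeeping. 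By contrast, the cyclic-permutation observation of the first route bypasses the involution entirely and settles the corollary at once.
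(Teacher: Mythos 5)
Your first route is correct and takes a genuinely different path from the paper. The paper proves this corollary by dualizing — exactly your second route: it forms $M^*=\left(\begin{smallmatrix}A^*&C^*\\ B^*&D^*\end{smallmatrix}\right)$, takes adjoints of the four commutation identities, asserts that $(A^*)^{\tiny\textcircled{D}}C^*(D^*)^{\tiny\textcircled{D}}B^*$ is nilpotent, applies Theorem 4.1 to $M^*$, and transfers p-core invertibility back to $M$. Your cyclic-permutation argument instead applies Theorem 4.1 to $M$ itself: the four identities here are literally those of Theorem 4.1, and with $u=A^{\tiny\textcircled{D}}$, $w=BD^{\tiny\textcircled{D}}C$ the identity $(wu)^{n+1}=w(uw)^nu$ shows that $BD^{\tiny\textcircled{D}}CA^{\tiny\textcircled{D}}$ and $A^{\tiny\textcircled{D}}BD^{\tiny\textcircled{D}}C$ are nilpotent simultaneously, so the corollary is an immediate consequence of the theorem. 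Your route is not only more elementary; it is more rigorous, because the obstacle you flag in the dual route is a genuine soft spot in the paper's own proof. The paper's claim that $(A^*)^{\tiny\textcircled{D}}C^*(D^*)^{\tiny\textcircled{D}}B^*$ is nilpotent tacitly identifies $(A^*)^{\tiny\textcircled{D}}$ with $(A^{\tiny\textcircled{D}})^*$ (so that this operator becomes $(BD^{\tiny\textcircled{D}}CA^{\tiny\textcircled{D}})^*$), yet the defining equations of the pseudo core inverse are not involution-symmetric — the adjoint of a p-core inverse satisfies the \emph{dual} p-core equations — and the paper's remark preceding the corollary only transfers p-core invertibility from $T$ to $T^*$, whereas the proof needs the reverse direction, from $M^*$ back to $M$. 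Your reduction bypasses both issues and should be preferred as a proof of the statement as written.
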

\begin{proof} Obviously, $M^*=\left(
  \begin{array}{cc}
   A^* & C^* \\
   B^* & D^*
  \end{array}
\right)$. By hypothesis, we have $A^*C^*=C^*D^*,D^*B^*$ $=B^*A^*,AC^*=C^*D,DB^*=B^*A$ and $(A^*)^{\tiny\textcircled{D}}C^*(D^*)^{\tiny\textcircled{D}}B^*$ is nilpotent.
Applying Theorem 4.1 to the operator $M^*$, we prove that $M^*$ has p-core inverse.
Therefore $M$ has p-core inverse, as desired.\end{proof}

We are now ready to prove:

\begin{thm} If $AB=BD, DC=CA, B^*A=DB^*$ and $B(CB)^{\tiny\textcircled{D}}DC(BC)^{\tiny\textcircled{D}}A$ is nilpotent, then $M$ has p-core inverse.\end{thm}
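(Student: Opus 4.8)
The plan is to write $M$ as a sum of two p-core invertible operators satisfying the hypotheses of Theorem 3.1, in the same spirit as the proofs of Theorem 4.1 and Corollary 4.2, but now interchanging the roles of the diagonal and off-diagonal parts so that the element whose nilpotency we exploit becomes $B(CB)^{\tiny\textcircled{D}}DC(BC)^{\tiny\textcircled{D}}A$. Thus I would set $M = P + Q$ with
$$P = \left(\begin{array}{cc} 0 & B \\ C & 0\end{array}\right), \qquad Q = \left(\begin{array}{cc} A & 0 \\ 0 & D\end{array}\right),$$
and apply Theorem 3.1 to the pair $(a,b) = (P,Q)$. This requires three things: that $P$ and $Q$ be p-core invertible, that $PQ = QP$, and that $P^*Q = QP^*$.

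First, the p-core invertibility. As in the proof of Theorem 4.1, $Q$ is p-core invertible with $Q^{\tiny\textcircled{D}} = \mathrm{diag}(A^{\tiny\textcircled{D}}, D^{\tiny\textcircled{D}})$. For $P$ one computes $P^2 = \mathrm{diag}(BC, CB)$, which is p-core invertible because $BC \in \mathcal{B}(X)^{\tiny\textcircled{D}}$ and $CB \in \mathcal{B}(Y)^{\tiny\textcircled{D}}$; hence $P$ is p-core invertible by \cite[Theorem 2.6]{GC}. The key computation at this stage is the explicit formula
$$P^{\tiny\textcircled{D}} = \left(\begin{array}{cc} 0 & B(CB)^{\tiny\textcircled{D}} \\ C(BC)^{\tiny\textcircled{D}} & 0\end{array}\right),$$
which I would verify by checking the three defining p-core identities for $P$ directly. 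This rests on the Cline-type relations $B(CB)^{\tiny\textcircled{D}} = (BC)^{\tiny\textcircled{D}}B$ and $C(BC)^{\tiny\textcircled{D}} = (CB)^{\tiny\textcircled{D}}C$ together with the p-core axioms for $BC$ and $CB$: self-adjointness of $(BC)(BC)^{\tiny\textcircled{D}}$ and $(CB)(CB)^{\tiny\textcircled{D}}$ gives $(PP^{\tiny\textcircled{D}})^* = PP^{\tiny\textcircled{D}}$, while $u(u^{\tiny\textcircled{D}})^2 = u^{\tiny\textcircled{D}}$ for $u \in \{BC, CB\}$ gives $P(P^{\tiny\textcircled{D}})^2 = P^{\tiny\textcircled{D}}$, and the Drazin-type relation gives $P^{\tiny\textcircled{D}}P^{k+1} = P^k$.

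Next, the commutation relations are pure block computations: $PQ = \left(\begin{smallmatrix} 0 & BD \\ CA & 0\end{smallmatrix}\right)$ and $QP = \left(\begin{smallmatrix} 0 & AB \\ DC & 0\end{smallmatrix}\right)$, so $PQ = QP$ reduces to the hypotheses $AB = BD$ and $DC = CA$; likewise $P^*Q = \left(\begin{smallmatrix} 0 & C^*D \\ B^*A & 0\end{smallmatrix}\right)$ and $QP^* = \left(\begin{smallmatrix} 0 & AC^* \\ DB^* & 0\end{smallmatrix}\right)$, so $P^*Q = QP^*$ reduces to $B^*A = DB^*$ and $C^*D = AC^*$. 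The first is a hypothesis; the second is the adjoint of $D^*C = CA^*$, the companion $*$-commutation relation which, together with $B^*A = DB^*$, plays here the role that the two $*$-conditions play in Theorem 4.1. Granting these, one finds
$$1 + P^{\tiny\textcircled{D}}Q = \left(\begin{array}{cc} I_X & B(CB)^{\tiny\textcircled{D}}D \\ C(BC)^{\tiny\textcircled{D}}A & I_Y\end{array}\right),$$
whose off-diagonal product $[B(CB)^{\tiny\textcircled{D}}D][C(BC)^{\tiny\textcircled{D}}A] = B(CB)^{\tiny\textcircled{D}}DC(BC)^{\tiny\textcircled{D}}A$ is nilpotent by hypothesis. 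Since a block operator $\left(\begin{smallmatrix} I & U \\ V & I\end{smallmatrix}\right)$ is invertible whenever $UV$ is nilpotent (the Schur complement $I - UV$ is then invertible), $1 + P^{\tiny\textcircled{D}}Q$ is invertible, hence p-core invertible with $[1 + P^{\tiny\textcircled{D}}Q]^{\pi} = 0$. Consequently condition (2) of Theorem 3.1 holds, because every summand of the displayed sum there carries the factor $[1 + P^{\tiny\textcircled{D}}Q]^{\pi}$; condition (1) then gives $M = P + Q \in \mathcal{B}(X \oplus Y)^{\tiny\textcircled{D}}$.

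The main obstacle is the explicit evaluation of $P^{\tiny\textcircled{D}}$. Matching the nilpotent element named in the statement forces precisely the off-diagonal form above, and verifying its three defining identities is where the Cline-type interaction between $(BC)^{\tiny\textcircled{D}}$, $(CB)^{\tiny\textcircled{D}}$, $B$ and $C$ has to be managed with care; once $P^{\tiny\textcircled{D}}$ is in hand, the remaining steps are routine block arithmetic together with a direct appeal to Theorem 3.1.
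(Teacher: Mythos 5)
Your overall architecture is exactly the paper's proof, just with the names $P$ and $Q$ interchanged: the paper also splits $M$ into diagonal plus off-diagonal parts, applies Theorem 3.1 with $a$ the off-diagonal part and $b$ the diagonal part, arrives at the same matrix $1+a^{\tiny\textcircled{D}}b$, and concludes from its invertibility (via the nilpotency hypothesis) that $[1+a^{\tiny\textcircled{D}}b]^{\pi}=0$, so that condition (2) of Theorem 3.1 holds vacuously. The one step of your proposal that would fail as written is the justification of the formula for $P^{\tiny\textcircled{D}}$. The Cline-type identities you say it rests on, $B(CB)^{\tiny\textcircled{D}}=(BC)^{\tiny\textcircled{D}}B$ and $C(BC)^{\tiny\textcircled{D}}=(CB)^{\tiny\textcircled{D}}C$, are false in general for pseudo core inverses: unlike the Drazin inverse, the p-core inverse involves the involution, and $B$, $C$ are not assumed to interact with $*$ in any way. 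Concretely, in ${\Bbb C}^{2\times 2}$ take
$$B=\left(\begin{array}{cc} 1&0\\ 0&0\end{array}\right),\qquad C=\left(\begin{array}{cc} 1&0\\ 1&0\end{array}\right).$$
Then $BC=\mathrm{diag}(1,0)$ and $CB=C$ are idempotent, $(BC)^{\tiny\textcircled{D}}=\mathrm{diag}(1,0)$ and $(CB)^{\tiny\textcircled{D}}=\frac{1}{2}\left(\begin{array}{cc} 1&1\\ 1&1\end{array}\right)$, so $B(CB)^{\tiny\textcircled{D}}=\frac{1}{2}\left(\begin{array}{cc} 1&1\\ 0&0\end{array}\right)$ while $(BC)^{\tiny\textcircled{D}}B=\mathrm{diag}(1,0)$. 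Your displayed formula for $P^{\tiny\textcircled{D}}$ is nevertheless correct, but for a different reason, and the paper's route to it is the clean one: the same result you cite for existence, \cite[Theorem 2.6]{GC}, also yields the representation $P^{\tiny\textcircled{D}}=P(P^2)^{\tiny\textcircled{D}}$, and multiplying $P$ into $(P^2)^{\tiny\textcircled{D}}=\mathrm{diag}\big((BC)^{\tiny\textcircled{D}},(CB)^{\tiny\textcircled{D}}\big)$ gives the off-diagonal formula at once. (A direct verification of the three defining identities is also possible, but it must proceed through the trivial intertwinings $(BC)B=B(CB)$, $C(BC)=(CB)C$ and powers, e.g. $Bv^{\tiny\textcircled{D}}=u^{n}B(v^{\tiny\textcircled{D}})^{n+1}$ with $u=BC$, $v=CB$, not through the Cline identities.)

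A second point, which is not a defect of your argument relative to the paper: you correctly note that $P^*Q=QP^*$ requires $C^*D=AC^*$ (equivalently $D^*C=CA^*$) in addition to the stated $B^*A=DB^*$, and this companion relation is neither among the hypotheses of Theorem 4.3 nor derivable from them (adjointing $DC=CA$ gives $C^*D^*=A^*C^*$, which is different). The paper's own proof uses the very same unstated relation silently in the chain of equalities establishing $Q^*P=PQ^*$. So this is a gap in the theorem as stated, shared by both proofs; your write-up at least makes the missing hypothesis visible.
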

\begin{proof} Write $M=P+Q$, where $$P=\left(
  \begin{array}{cc}
    A & 0 \\
    0 & D
  \end{array}
\right), Q=\left(
  \begin{array}{cc}
   0 & B \\
   C & 0
  \end{array}
\right).$$ As is the proof of Theorem 4.1, $P$ and $Q$ have p-core inverses. Moreover, we check that
$$\begin{array}{rll}
Q^*P&=&\left(
  \begin{array}{cc}
   0 & C^* \\
  B^* & 0
  \end{array}
\right)\left(
  \begin{array}{cc}
    A & 0 \\
    0 & D
  \end{array}
\right)\\
&=&\left(
  \begin{array}{cc}
    0 & C^*D \\
    B^*A & 0
  \end{array}
\right)\\
&=&\left(
  \begin{array}{cc}
    0 & AC^* \\
   DB^* & 0
  \end{array}
\right)\\
&=&\left(
  \begin{array}{cc}
    A & 0 \\
    0 & D
  \end{array}
\right)\left(
  \begin{array}{cc}
   0 & C^* \\
  B^* & 0
  \end{array}
\right)\\
&=&PQ^*.
\end{array}$$ Similarly, $QP=PQ$.
Since $(Q^2)^{\tiny\textcircled{D}}=\left(
  \begin{array}{cc}
    (BC)^{\tiny\textcircled{D}} & 0 \\
    0 & (CB)^{\tiny\textcircled{D}}
  \end{array}
\right)$, it follows by ~\cite[Theorem 2.6]{GC} that
$$\begin{array}{rll}
Q^{\tiny\textcircled{D}}&=&Q(Q^2)^{\tiny\textcircled{D}}\\
&=&\left(
  \begin{array}{cc}
   0 & B \\
   C & 0
  \end{array}
\right)\left(
  \begin{array}{cc}
    (BC)^{\tiny\textcircled{D}} & 0 \\
    0 & (CB)^{\tiny\textcircled{D}}
  \end{array}
\right)\\
&=&\left(
  \begin{array}{cc}
   0 & B(CB)^{\tiny\textcircled{D}}\\
   C (BC)^{\tiny\textcircled{D}} & 0
  \end{array}
\right).
\end{array}$$
Further, we verify that
$$\begin{array}{rll}
I_{X\oplus Y}+Q^{\tiny\textcircled{D}}P&=&I_{X\oplus Y}+\left(
  \begin{array}{cc}
   0 & B(CB)^{\tiny\textcircled{D}}\\
   C (BC)^{\tiny\textcircled{D}} & 0
  \end{array}
\right)\left(
  \begin{array}{cc}
    A & 0 \\
    0 & D
  \end{array}
\right)\\
&=&\left(
\begin{array}{cc}
I_X&B(CB)^{\tiny\textcircled{D}}D\\
 C (BC)^{\tiny\textcircled{D}}A&I_Y
\end{array}
\right).
\end{array}$$ Since $B(CB)^{\tiny\textcircled{D}}DC (BC)^{\tiny\textcircled{D}}A$ is nilpotent, we prove that
$I_{X\oplus Y}+Q^{\tiny\textcircled{D}}P$ is invertible; hence, it has p-core inverse.  Additionally, $[I_{X\oplus Y}+Q^{\tiny\textcircled{D}}P]^{\pi}=0$.
In light of Theorem 3.1, $M$ has p-core inverse.\end{proof}

\begin{cor} If $AB=BD, DC=CA, C^*A=DC^*$ and $A(CB)^{\tiny\textcircled{D}}BD(BC)^{\tiny\textcircled{D}}$ $C$ is nilpotent is nilpotent, then $M$ has p-core inverse.\end{cor}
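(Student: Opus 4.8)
The plan is to derive this corollary from Theorem 4.3 in exactly the way Corollary 4.2 was derived from Theorem 4.1, namely by passing to the conjugate operator $M^{*}$, whose blocks are $A^{*}$ (top-left), $C^{*}$ (top-right), $B^{*}$ (bottom-left) and $D^{*}$ (bottom-right). I would regard $M^{*}$ as the block operator of Theorem 4.3 in which $A^{*},C^{*},B^{*},D^{*}$ occupy the positions of $A,B,C,D$. Since taking conjugates reverses products and preserves p-core invertibility (with $(T^{*})^{\tiny\textcircled{D}}=(T^{\tiny\textcircled{D}})^{*}$, as used for Corollary 4.2), it suffices to verify the four hypotheses of Theorem 4.3 for $M^{*}$; then $M=(M^{*})^{*}$ inherits a p-core inverse.

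First I would match the two intertwining relations. Taking adjoints of $DC=CA$ gives $A^{*}C^{*}=C^{*}D^{*}$, which is precisely the relation between the $(1,1)$ and off-diagonal blocks that Theorem 4.3 demands of $M^{*}$; likewise, taking adjoints of $AB=BD$ yields $D^{*}B^{*}=B^{*}A^{*}$, the second intertwining relation for $M^{*}$. Next, the hypothesis $C^{*}A=DC^{*}$ is exactly the $*$-commutation condition of Theorem 4.3 for the off-diagonal block of $M^{*}$: upon taking adjoints it delivers the identity playing the role of ``$B^{*}A=DB^{*}$'' for the entries $C^{*},A^{*},D^{*}$.

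It then remains to transfer the nilpotency hypothesis. Using $B^{*}C^{*}=(CB)^{*}$ and $C^{*}B^{*}=(BC)^{*}$, together with the conjugate formula for the p-core inverse, the product that Theorem 4.3 requires to be nilpotent for $M^{*}$ is the conjugate of the operator in the statement; since an operator is nilpotent if and only if its conjugate is, this condition holds. Theorem 4.3 then gives $M^{*}\in\mathcal{B}(Y^{*}\oplus X^{*})^{\tiny\textcircled{D}}$, and hence $M$ has a p-core inverse.

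The computations are purely formal, so I expect no genuine difficulty; the one place that requires care is keeping the substitution $A\leftrightarrow A^{*}$, $B\leftrightarrow C^{*}$, $C\leftrightarrow B^{*}$, $D\leftrightarrow D^{*}$ consistent through every hypothesis — in particular making sure that the $*$-commutation condition and the precise order of the factors $(CB)^{\tiny\textcircled{D}}$ and $(BC)^{\tiny\textcircled{D}}$ in the nilpotent product come out correctly matched after conjugation. This bookkeeping is the main (and essentially the only) obstacle.
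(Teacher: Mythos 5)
Your strategy coincides exactly with the paper's: its entire proof of this corollary is the single sentence that one should apply Theorem 4.3 to $M^{*}$, with the same block dictionary $\tilde{A}=A^{*}$, $\tilde{B}=C^{*}$, $\tilde{C}=B^{*}$, $\tilde{D}=D^{*}$ that you describe. So in outline you have reproduced the intended argument.

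However, the bookkeeping that you defer and then certify as unproblematic is precisely where the literal statement breaks down. Theorem 4.3 applied to $M^{*}$ demands $\tilde{B}^{*}\tilde{A}=\tilde{D}\tilde{B}^{*}$, i.e. $CA^{*}=D^{*}C$ (equivalently, conjugating, $AC^{*}=C^{*}D$); the corollary instead assumes $C^{*}A=DC^{*}$, whose conjugate is $A^{*}C=CD^{*}$. These are different identities --- the factors sit on opposite sides --- and neither implies the other: for instance, with $X=Y={\Bbb C}^{2}$, $A=\left(\begin{smallmatrix}0&1\\0&0\end{smallmatrix}\right)$, $C=\left(\begin{smallmatrix}1&0\\0&0\end{smallmatrix}\right)$, $D=0$, one has $CA^{*}=D^{*}C$ but $C^{*}A\neq DC^{*}$. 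The same transposition occurs in the nilpotency hypothesis: what conjugation actually requires from Theorem 4.3 is nilpotency of $A(BC)^{\tiny\textcircled{D}}BD(CB)^{\tiny\textcircled{D}}C$, not of the printed $A(CB)^{\tiny\textcircled{D}}BD(BC)^{\tiny\textcircled{D}}C$. In fact the printed hypotheses cannot be the intended ones, since $C^{*}A$, $DC^{*}$ and $A(CB)^{\tiny\textcircled{D}}BD(BC)^{\tiny\textcircled{D}}C$ are not even composable maps when $X\neq Y$ (e.g. $(BC)^{\tiny\textcircled{D}}$ acts on $X$ while $C$ lands in $Y$); the corollary evidently contains typos, and the corrected hypotheses are $AC^{*}=C^{*}D$ together with nilpotency of $A(BC)^{\tiny\textcircled{D}}BD(CB)^{\tiny\textcircled{D}}C$. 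So your argument, like the paper's one-line proof, establishes the corrected statement; what is missing from your write-up is exactly the verification you yourself identified as the only obstacle, and carrying it out shows that the hypotheses do \emph{not} ``come out correctly matched after conjugation'' as written.
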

\begin{proof} Similarly to Corollary 4.2, it is enough to apply Theorem 4.3 to the operator $M^*$.\end{proof}

\begin{thm} If $BC=0, CB=0, CA=DC, AC^*=C^*D$ and $$\sum\limits_{i=1}^{i(A)}A^{i-1}A^{\pi}BD^{m-i}=0,$$ then $M$ has p-core inverse.\end{thm}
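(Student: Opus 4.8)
The plan is to mirror the splitting strategy of Theorems 4.1 and 4.3, but with a decomposition adapted to the present hypotheses, which notably do \emph{not} include the ``$B$-side'' relations $AB=BD$, $A^*B=BD^*$. I would write $M=P+Q$ with
$$P=\left(\begin{array}{cc} A & B \\ 0 & D \end{array}\right), \qquad Q=\left(\begin{array}{cc} 0 & 0 \\ C & 0 \end{array}\right),$$
so that $B$ is absorbed into the triangular summand and $C$ is isolated in the strictly lower one. The first task is to show both summands are p-core invertible. For $Q$, the conditions $BC=0$ and $CB=0$ give $Q^2=0$, so $Q$ is nilpotent; a nilpotent element lies in $\mathcal{A}^{\tiny\textcircled{D}}$ with $Q^{\tiny\textcircled{D}}=0$ (by the characterization of Theorem 1.1, $Q^2=0$ has core inverse $0$). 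For $P$, I would invoke Lemma 2.5(1): since $A\in\mathcal{B}(X)^{\tiny\textcircled{D}}$, $D\in\mathcal{B}(Y)^{\tiny\textcircled{D}}$, and the hypothesis $\sum_{i=1}^{i(A)}A^{i-1}A^{\pi}BD^{m-i}=0$ is exactly the triangular compatibility condition of that lemma (with $m=i(A)$), I conclude $P\in\mathcal{A}^{\tiny\textcircled{D}}$ (with an upper-triangular p-core inverse, though that detail is not needed below).

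Next I would verify the two commutation hypotheses of Theorem 3.1 for the pair $P,Q$ by direct block computation. One finds $PQ=\left(\begin{smallmatrix} BC & 0\\ DC & 0\end{smallmatrix}\right)$ and $QP=\left(\begin{smallmatrix} 0&0\\ CA & CB\end{smallmatrix}\right)$; using $BC=0$, $CB=0$ and $CA=DC$ these coincide, so $PQ=QP$. Likewise, from $P^*=\left(\begin{smallmatrix} A^* & 0\\ B^* & D^*\end{smallmatrix}\right)$ one gets $P^*Q=\left(\begin{smallmatrix} 0&0\\ D^*C & 0\end{smallmatrix}\right)$ and $QP^*=\left(\begin{smallmatrix} 0&0\\ CA^* & 0\end{smallmatrix}\right)$; taking the adjoint of $AC^*=C^*D$ yields $D^*C=CA^*$, whence $P^*Q=QP^*$, which is equivalent to $Q^*P=PQ^*$.

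Finally I would apply Theorem 3.1 with the role assignment $a=Q$, $b=P$. The point of choosing the nilpotent summand as $a$ is that $a^{\tiny\textcircled{D}}=Q^{\tiny\textcircled{D}}=0$, which makes condition (2) of that theorem hold for free: $1+a^{\tiny\textcircled{D}}b=1$ is invertible, hence p-core invertible, and since $aa^{\tiny\textcircled{D}}-a^{\tiny\textcircled{D}}a=0$ every summand of the displayed sum carries the factor $a[aa^{\tiny\textcircled{D}}-a^{\tiny\textcircled{D}}a]=0$, so that sum vanishes identically for any admissible $m$. The implication $(2)\Rightarrow(1)$ of Theorem 3.1 then delivers $M=a+b=P+Q\in\mathcal{A}^{\tiny\textcircled{D}}$, as required.

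I expect the only genuinely delicate point to be the choice of decomposition itself. Because the hypotheses supply $CA=DC$ and $AC^*=C^*D$ but \emph{not} $AB=BD$ or $A^*B=BD^*$, the diagonal/antidiagonal split used in Theorems 4.1 and 4.3 would fail to make the two summands commute. Recognizing that $B$ must be kept inside the triangular block (so that its compatibility is governed by Lemma 2.5 rather than by a commutation relation), while $C$ is quarantined in a nilpotent block, is the key structural observation. Once that split is in place, the verifications are routine block-matrix algebra and the invocation of Theorem 3.1 is immediate, with the nilpotency of $Q$ trivializing its second condition.
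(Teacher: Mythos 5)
Your proof is correct and takes essentially the same approach as the paper's: the paper also writes $M$ as the sum of the strictly lower-triangular nilpotent block and the upper-triangular block (with the labels $P$ and $Q$ interchanged), gets p-core invertibility of the triangular summand from Lemma 2.5, and applies Theorem 3.1 with the nilpotent summand in the role of $a$, so that $1+a^{\tiny\textcircled{D}}b=1$ and the sum condition holds trivially. The only cosmetic difference is that you check the star-commutation in the form $P^*Q=QP^*$ via the adjoint relation $D^*C=CA^*$, whereas the paper verifies it directly from $AC^*=C^*D$.
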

\begin{proof} Write $M=P+Q$, where $$P=\left(
  \begin{array}{cc}
   0 & 0 \\
    C & 0
  \end{array}
\right), Q=\left(
  \begin{array}{cc}
   A & B \\
   0 & D
  \end{array}
\right).$$ Clearly, $P$ has p-core inverse. According to Lemma 2.5, $Q$ has p-core inverse.
We check that
$$\begin{array}{rll}
PQ&=&\left(
  \begin{array}{cc}
   0 & 0 \\
    CA & CB
  \end{array}
\right)\\
&=&\left(
  \begin{array}{cc}
    BC & 0 \\
    DC & 0
  \end{array}
\right)\\
&=&QP;\\
P^*Q&=&\left(
  \begin{array}{cc}
    0 & C^*D \\
    0 & 0
  \end{array}
\right)\\
&=&\left(
  \begin{array}{cc}
    0 & AC^* \\
   0 & 0
  \end{array}
\right)\\
&=&QP^*.
\end{array}$$ Clearly $P^{\tiny\textcircled{D}}=0$, and so $I_{X\oplus Y}+P^{\tiny\textcircled{D}}Q=I_{X\oplus Y}$ has p-core inverse.
Therefore $M$ has p-core inverse by Theorem 3.1.\end{proof}

\begin{cor} If $BC=0, CB=0, AB=BD, A^*B=BD^*$ and $$\sum\limits_{i=1}^{i(A)}CA^{i-1}A^{\pi}=0,$$ then $M$ has p-core inverse.\end{cor}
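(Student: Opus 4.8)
The plan is to reduce this to Theorem 4.5 by the same conjugation device that produced Corollaries 4.2 and 4.4: apply Theorem 4.5 to
$$M^*=\left(\begin{array}{cc} A^* & C^* \\ B^* & D^* \end{array}\right)$$
and then pull p-core invertibility back to $M$ through the observation, noted just before Corollary 4.2, that $T$ is p-core invertible precisely when $T^*$ is. Reading off the blocks of $M^*$ under the correspondence $(A,B,C,D)\mapsto(A^*,C^*,B^*,D^*)$, I would first verify the algebraic hypotheses of Theorem 4.5 for $M^*$. These come for free by taking adjoints: $C^*B^*=(BC)^*=0$ and $B^*C^*=(CB)^*=0$ give the two orthogonality relations; the intertwining ``$CA=DC$'' turns into $B^*A^*=D^*B^*$, the adjoint of $AB=BD$; and ``$AC^*=C^*D$'' turns into $A^*B=BD^*$, which is assumed.

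The substantive step is the summation hypothesis. Since the involution carries $A^{D}$ to $(A^*)^{D}$, one has $(A^*)^{\pi}=(A^{\pi})^*$ and $(A^*)^{i-1}=(A^{i-1})^*$; applying $*$ to the assumed relation $\sum_{i}CA^{i-1}A^{\pi}=0$ then reverses each product and yields the corresponding sum in $(A^*)^{i-1},(A^*)^{\pi},C^*$. This is exactly the datum that Lemma 2.5(1) needs in order to certify p-core invertibility of the upper-triangular corner $\left(\begin{array}{cc}A^*&C^*\\0&D^*\end{array}\right)$ occurring in the proof of Theorem 4.5 for $M^*$. With that corner p-core invertible and with the nilpotent block $\left(\begin{array}{cc}0&0\\B^*&0\end{array}\right)$ in the role of $P$ (so that $I+P^{\tiny\textcircled{D}}Q=I$ and $[\,I+P^{\tiny\textcircled{D}}Q\,]^{\pi}=0$), the proof of Theorem 4.5 runs without change and, via Theorem 3.1, produces $(M^*)^{\tiny\textcircled{D}}$, hence $M^{\tiny\textcircled{D}}$.

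The one place I expect genuine care is matching the two forms of the summation identity. The condition in Lemma 2.5(1) is the $D$-weighted sum $\sum_{i=1}^m(A^*)^{i-1}(A^*)^{\pi}C^*(D^*)^{m-i}=0$, while the adjoint of the stated hypothesis most naturally produces its unweighted version; reconciling them is the crux. Concretely, one truncates using $(A^*)^{k}(A^*)^{\pi}=0$ for $k\ge i(A^*)=i(A)$ to restrict to $1\le i\le i(A)$, and must then check that the surviving powers $(D^*)^{m-i}$ do not obstruct the vanishing. This adjoint bookkeeping — together with the verification that the correct (weighted) form of the hypothesis is the one that feeds Theorem 4.5 — is the only nonroutine ingredient; the rest is the standard transfer to $M^*$ already carried out twice in this section.
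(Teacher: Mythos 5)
Your strategy coincides with the paper's: its entire proof of this corollary is the single sentence that, as in Corollary 4.2, one applies Theorem 4.5 to the operator $M^*$, and your verification of the four algebraic hypotheses for $M^*$ (namely $C^*B^*=(BC)^*=0$, $B^*C^*=(CB)^*=0$, $B^*A^*=D^*B^*$ as the adjoint of $AB=BD$, and $A^*B=BD^*$ as assumed) is exactly the routine content compressed into that sentence.

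The step you explicitly leave open, however, is a genuine gap, and the mechanism you hint at cannot close it. Truncation is vacuous here: the sum already stops at $i(A)$, and after truncating, the weights $(D^*)^{m-i}$ are still present; in general the vanishing of $\sum_i t_i$ does not imply the vanishing of $\sum_i t_i(D^*)^{m-i}$, so some feature specific to this hypothesis must be invoked. The missing observation is this: since $A^{\pi}$ commutes with $A$, each term satisfies $CA^{i-1}A^{\pi}=CA^{\pi}N^{i-1}$ where $N:=AA^{\pi}$ is nilpotent with $N^{i(A)}=0$, whence
$$\sum\limits_{i=1}^{i(A)}CA^{i-1}A^{\pi}=CA^{\pi}\bigl(1+N+\cdots+N^{i(A)-1}\bigr),$$
and the right-hand factor is invertible (identity plus a nilpotent). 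So the stated hypothesis is \emph{equivalent} to $CA^{\pi}=0$, i.e. to $(A^*)^{\pi}C^*=0$; consequently every single term $(A^*)^{i-1}(A^*)^{\pi}C^*(D^*)^{m-i}$ vanishes, for any $m\geq i(A^*)=i(A)$, which is precisely the weighted summation condition that Theorem 4.5 (through Lemma 2.5) requires of $M^*$. With this factorization inserted, your argument runs to completion. For what it is worth, the paper's own one-line proof glosses over exactly this point (and also uses, without comment, the converse of its remark that $T\in\mathcal{B}(X)^{\tiny\textcircled{D}}$ implies $T^*\in\mathcal{B}(X^*)^{\tiny\textcircled{D}}$, which is needed to return from $M^*$ to $M$), so you located the right crux — but a complete proof must resolve it, not merely flag it.
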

\begin{proof} As in the discussion in Corollary 4.2, we may apply Theorem 4.5 to the operator $M^*$.\end{proof}

\vskip10mm
\end{document}